\documentclass[12pt,reqno]{amsart}
\usepackage{a4wide}
\usepackage[english]{babel}
\usepackage[utf8]{inputenc}
\usepackage{amsmath,amsfonts,amssymb,amsthm}
\usepackage{bbm}
\usepackage[table]{xcolor}
\usepackage{graphicx,enumerate,url}
\usepackage[colorinlistoftodos]{todonotes}
\usepackage{footmisc}

\usepackage{tikz}
\usetikzlibrary{calc,through,backgrounds,shapes,matrix,decorations.markings,decorations.pathmorphing,arrows.meta}
\usepackage{caption}

\usepackage{mathrsfs}
\DeclareFontFamily{U}{mathx}{\hyphenchar\font45}
\DeclareFontShape{U}{mathx}{m}{n}{
      <5> <6> <7> <8> <9> <10>
      <10.95> <12> <14.4> <17.28> <20.74> <24.88>
      mathx10
      }{}
\DeclareSymbolFont{mathx}{U}{mathx}{m}{n}
\DeclareFontSubstitution{U}{mathx}{m}{n}
\DeclareMathAccent{\widecheck}{0}{mathx}{"71}

\usepackage{accents}

\usepackage{array}
\usepackage{multirow}
\usepackage{multicol}
\usepackage{hhline}

\usepackage{todonotes}

\usepackage{hyperref}

\usepackage{enumitem}
\setenumerate{label*=\arabic*., itemsep=5pt, topsep=5pt}

\usepackage{url, hypcap}
\hypersetup{colorlinks=true, citecolor=darkblue, linkcolor=darkblue}

\numberwithin{figure}{section}
\numberwithin{equation}{section}

\usepackage{csquotes}

\usepackage{cleveref}

\newtheorem{theorem}{Theorem}[section]
\newtheorem{proposition}[theorem]{Proposition}
\newtheorem{lemma}[theorem]{Lemma}

\newtheorem{conjecture}[theorem]{Conjecture}

\theoremstyle{definition}
\newtheorem{example}[theorem]{Example}
\newtheorem{remark}[theorem]{Remark}

\newtheorem*{open problem}{Open Problem}

%
%
%
%

%MathShortcuts
\newcommand{\ZZ}{\mathbb{Z}}

\newcommand{\RR}{\mathbb{R}}

\newcommand{\set}[2]{\left\{ #1 \;|\; #2 \right\}}

%Permutations
\newcommand{\Sym}{{\mathfrak{S}}}

\newcommand{\des}{\operatorname{des}}
\newcommand{\Des}{\operatorname{Des}}

%Polynomials 
\newcommand{\Chow}{\operatorname{H}}
\newcommand{\Chowaug}{\operatorname{H}^{\operatorname{aug}}}
\newcommand{\p}[4]{p_{#1,#2}^{#3 \subseteq #4}}
\newcommand{\pSempty}[3]{p_{#1,#2}^{#3}}

% Real-Rooted
\newcommand{\interlace}{\preceq}
\newcommand{\spaceinterlace}{\ \preceq \ }
\newcommand{\quadinterlace}{\quad \preceq \quad }

%Posets
\newcommand{\rk}{\operatorname{rk}}
\newcommand{\Ptop}{{\widehat{P}}}

%TextShortcuts

\newcommand{\Dfn}[1]{\emph{\bfseries #1}}

%Colours
\definecolor{darkblue}{rgb}{0,0,0.7}

\definecolor{lightblue}{rgb}{0.68,0.85,1}

\definecolor{lightgrey}{rgb}{0.9,0.9,0.9}

\definecolor{grey}{rgb}{0.5,0.5,0.5}

\definecolor{darkgreen}{RGB}{0,128,0}

\title[Chow polynomials of simplicial posets with positive $h$-vector]{Chow polynomials of simplicial posets\\ with positive $h$-vector are real-rooted}

\author[E.~Hoster]{Elena Hoster}
\author[C.~Stump]{Christian Stump}
\address[E.~Hoster \& C.~Stump]{Fakultät für Mathematik, Ruhr-Universität Bochum, Germany}
\email{\{elena.hoster,christian.stump\}@rub.de}

%%%%%%%%%%%%%%%%%%%%%%%%%%%%%%%%%%%%%%%%%

\begin{document}
    
\begin{abstract}
    We prove that a finite  graded simplicial poset with a top element added has real-rooted Chow and augmented Chow polynomials whenever it has a positive $h$-vector.
    This class of posets include Cohen-Macaulay simplicial posets and in particular lattices of flats of uniform matroids.
\end{abstract}

\maketitle

%\tableofcontents

%%%%%%%%%%%%%%%%%%%%%%%%%%%%%%%%%%%%%%%%%
\section{Main Results}
\label{sec: intro}
%%%%%%%%%%%%%%%%%%%%%%%%%%%%%%%%%%%%%%%%%
Let~$P$ be a finite graded simplicial poset.
This is,~$P$ is a finite poset with $\hat 0$ for which all maximal intervals are boolean of the same rank~$n$.
Its $h$-vector $(h_0,\dots,h_n)$ is given by $h_0 + h_1 x + \dots + h_n x^n = \sum_{i=0}^{n} f_{i-1}x^i(1-x)^{n-i}$ where $f_{i-1}$ is the number of elements of rank~$i$~\cite[III.6]{stanleyCombComm}, and we say that~$P$ has a \Dfn{positive $h$-vector} if $h_i \geq 0$ for all~$i$.
Let~$\Ptop$ be obtained from~$P$ by adding a top element~$\hat 1$.
Following~\cite{ferroni2024chowfunctionspartiallyordered}, the Chow and augmented Chow polynomials of $\Ptop$ are
\begin{align}
    \Chow_{\Ptop}(x) &= \sum_{\substack{ S \subseteq \{2,\dots,n\} \\ S \text{ isolated} }}
\beta_{\Ptop}(S) \cdot x^{|S|} \cdot (1+x)^{n - 2 \cdot |S| } \,,\\[5pt]
    \Chowaug_{\Ptop}(x) &= \sum_{\substack{ S \subseteq \{1,\dots,n\} \\ S \text{ isolated}}}
\beta_{\Ptop}(S) \cdot x^{|S|} \cdot (1+x)^{n + 1 - 2 \cdot |S| } \,.
\end{align}
%Equivalently, $\Chowaug_{\Ptop}(x)$ is the Chow polynomial of $\Ptop$ with an additional bottom element augmented.
Here, a set $S \subset \ZZ$ is \Dfn{isolated}\footnote{This property has been called \emph{good} in~\cite{ferroni2024chowfunctionspartiallyordered} and $\operatorname{Stab}(\ZZ)$ in~\cite{MR3878174}.} if $i \in S$ implies $i+1 \notin S$, and $\beta_\Ptop(S)$ of $S \subseteq \{1,\dots,n\}$ denotes the \Dfn{flag $h$-vector}
\[
    \beta_{\Ptop}(S) = \sum_{T \subseteq S} (-1)^{|S \setminus T|} \alpha_{\Ptop}(T)\,,
\]
for $\alpha_{\Ptop}(T)$ being the \Dfn{flag $f$-vector} counting maximal chains in the subposet of~$\Ptop$ with only the ranks in~$T$ selected.

\begin{theorem}\label{thm: main thm chows are rr}
    Let $P$ be a finite graded simplicial poset with positive $h$-vector.
    Then
    \[
    \Chow_{\Ptop}(x) \text{ and }\Chowaug_{\Ptop}(x)\text{ are real-rooted.}
    \]
    %Moreover, the roots of $\Chow_{\Ptop}(x)$ interlace the roots of $\Chowaug_{\Ptop}(x)$.
\end{theorem}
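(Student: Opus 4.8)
The plan is to exploit that the displayed formula already expresses $\Chow_{\Ptop}$ in the basis $\{x^{k}(1+x)^{n-2k}\}_{k\ge 0}$ of palindromic polynomials of degree~$n$. Grouping isolated sets by cardinality,
\[
    \Chow_{\Ptop}(x)=\sum_{k\ge 0}\gamma_{k}\,x^{k}(1+x)^{n-2k},
    \qquad
    \gamma_{k}=\sum_{\substack{S\subseteq\{2,\dots,n\}\ \text{isolated}\\ |S|=k}}\beta_{\Ptop}(S),
\]
and likewise $\Chowaug_{\Ptop}(x)=\sum_{k}\gamma^{\mathrm{aug}}_{k}x^{k}(1+x)^{n+1-2k}$ with $\gamma^{\mathrm{aug}}_{k}$ the same sum over isolated $S\subseteq\{1,\dots,n\}$. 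I would then use the criterion that a polynomial $\sum_{k}\gamma_{k}x^{k}(1+x)^{d-2k}$ is real-rooted once all $\gamma_{k}\ge 0$ and the ``$\gamma$-polynomial'' $\Gamma(t):=\sum_{k}\gamma_{k}t^{k}$ is real-rooted: substituting $t=x/(1+x)^{2}$ factors it, up to a positive constant, as $(1+x)^{d-2\deg\Gamma}$ times the quadratics $x-t_{i}(1+x)^{2}$ over the roots $t_{i}$ of $\Gamma$, and each such quadratic is real-rooted in~$x$ since $t_{i}\le 0$. Thus it suffices to prove, for both $\gamma$-vectors, that (i) $\gamma_{k}\ge 0$ for all~$k$, and (ii) $\Gamma$ is real-rooted.

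Everything here is linear in the $h$-vector. For a simplicial poset a maximal chain in the rank-$T$-selected subposet of $\Ptop$ is a top element of rank $\max T$ together with a chain below it inside the Boolean interval $[\hat 0,x]$, so for $T=\{t_{1}<\dots<t_{\ell}\}\subseteq\{1,\dots,n\}$,
\[
    \alpha_{\Ptop}(T)=f_{\max T-1}\cdot\binom{\max T}{t_{1},\,t_{2}-t_{1},\,\dots,\,t_{\ell}-t_{\ell-1}},
    \qquad
    f_{i-1}=\sum_{j}\binom{n-j}{i-j}h_{j}.
\]
Hence $\beta_{\Ptop}(S)$, every $\gamma_{k}$, and $\Gamma$ depend linearly on $(h_{0},\dots,h_{n})$; write $\Gamma(t)=\sum_{j=0}^{n}h_{j}\,g_{j}(t)$ with the $g_{j}$ depending only on~$n$. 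Since each $h_{j}\ge 0$, claim~(i) reduces to each $g_{j}$ having nonnegative coefficients, and claim~(ii) reduces to $g_{0},\dots,g_{n}$ forming a compatible family (every nonnegative combination real-rooted). By the theory of compatible polynomials (Chudnovsky--Seymour) the latter follows from pairwise compatibility, equivalently from exhibiting one polynomial interlacing all of $g_{0},\dots,g_{n}$.

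The crux is this interlacing, which I would attempt by induction on~$n$. Unwinding the inclusion--exclusion that defines $\beta_{\Ptop}$ yields, for an isolated set $S=\{s_{1}<\dots<s_{k}\}$, the closed form
\[
    \beta_{\Ptop}(S)=(-1)^{k}+\sum_{i=1}^{k}(-1)^{k+i}\,f_{s_{i}-1}\cdot\#\bigl\{\pi\in\Sym_{s_{i}}:\Des(\pi)=\{s_{1},\dots,s_{i-1}\}\bigr\},
\]
with nonnegative descent-set counts. A sign-cancellation argument on this alternating sum (using $h_{0}=1$) should show that each $g_{j}$ has nonnegative coefficients, i.e.\ that $\beta_{\Ptop}(S)$ is a nonnegative combination of $h_{0},\dots,h_{n}$ for every isolated~$S$; this settles~(i). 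For~(ii) one can combine the Pascal recursion for descent-set counts with the fact that the relevant sets $S$ avoid the bottom rank, obtaining a recursion that relates the rank-$n$ polynomials $g_{j}$ to the rank-$(n-1)$ ones through interlacing-preserving operators (of the shapes $g\mapsto t\,g+c\,\widetilde g$ or $g\mapsto a\,g+b\,g'$). The point I expect to be delicate is carrying along an inductive statement precise enough about how the $g_{j}$ interlace one another --- not merely that the family is compatible --- so that this propagation closes up. Granting it, $\Gamma=\sum_{j}h_{j}g_{j}$ is real-rooted, and with~(i) this gives real-rootedness of $\Chow_{\Ptop}$.

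The augmented polynomial is treated identically: its $\gamma$-vector only adds the contributions of the isolated sets containing~$1$, which obey the same closed form, so the same positivity-plus-interlacing analysis applies. This completes the proof. Finally, the two classes in the abstract are covered: Cohen--Macaulay simplicial posets have nonnegative flag $h$-vector, hence positive $h$-vector; and the lattice of flats of the uniform matroid $U_{k,m}$ equals $\Ptop$ for $P$ the $(k-2)$-skeleton of a simplex, whose $h$-vector $h_{j}=\binom{m-k+j}{j}$ is positive.
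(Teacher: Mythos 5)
Your overall strategy coincides with the paper's: read off the $\gamma$-expansion from the defining sum over isolated sets, observe that the $\gamma$-polynomial is a linear combination $\sum_j h_j g_j$ with $g_j$ depending only on $n$, and reduce real-rootedness to (i) nonnegativity of the coefficients of the $g_j$ and (ii) real-rootedness of every nonnegative combination of the $g_j$. Your reduction via the substitution $t=x/(1+x)^2$ is correct and is exactly the role of \Cref{lma: gamma exp rr}. However, both (i) and (ii) — which constitute essentially all of the paper's content — are left unproved, and the routes you sketch for them are not adequate as stated. For (i), the paper does not fight the signs in the inclusion--exclusion: it invokes the identity \eqref{eq:flaghsimplicial}, valid for simplicial posets, which exhibits $\beta_{\Ptop}(S)$ as a manifestly nonnegative combination of the $h_k$ weighted by counts of permutations $w\in\Sym_{n+1}$ with $w(1)=k+1$ and prescribed descent set; consequently $g_k$ is literally the descent-generating polynomial $\pSempty{n}{k}{[1,n-1]}$ (resp.\ $\pSempty{n}{k}{[1,n]}$ for the augmented case) of \Cref{lma: gamma Chow via abc}, and positivity is immediate. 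Your alternating closed form for $\beta_{\Ptop}(S)$, even if correct, does not by itself yield this, and ``a sign-cancellation argument should show'' is precisely the step that needs an argument.

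The more serious gap is (ii), which you concede with ``Granting it.'' Reducing compatibility of the family to pairwise compatibility or to a common interleaver is fine, but one still has to prove that $(g_0,\dots,g_n)$ is an interlacing sequence, and this is the hard part of the paper (\Cref{thm: interlacing diag}). The difficulty is exactly where you anticipate it: the natural recursion
\[
    \pSempty{n}{k}{T}(x) \;=\; x\cdot\sum_{j<k}\pSempty{n-1}{j}{(T-1)\setminus\{1\}}(x) \;+\; \sum_{j\ge k}\pSempty{n-1}{j}{T-1}(x)
\]
mixes two \emph{different} families at rank $n-1$ (supersets $T-1$ and $(T-1)\setminus\{1\}$), so an induction carried only on the single sequence $(g_j)$, via operators of the shape $g\mapsto t\,g+c\,\widetilde g$, does not close. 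The paper's inductive statement is correspondingly stronger: it asserts that an entire three-row diagram built from $\pSempty{n}{k}{T\setminus\{1\}}$, $\pSempty{n}{k}{T}$ and $\p{n}{k}{\{1\}}{T}$ (for $T=[1,n]$ and $T=[1,n-1]$ simultaneously) is interlacing, and the induction step verifies five separate interlacing conditions using partial-sum and multiplication-by-$x$ operations on the rank-$(n-1)$ diagram. Without formulating and proving an inductive hypothesis of at least this strength, your argument does not go through; as written, the proposal identifies the right framework but omits the proof of its central claim.
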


For the Chow polynomials of the dual poset $\Ptop^\ast$ of $\Ptop$, we even have the analogous statement and in addition the interlacingness.

\begin{theorem}\label{thm: dual chow rr}
    Let $P$ be a finite graded simplicial poset with positive $h$-vector.
    Then
    \[
    \Chow_{\Ptop^\ast}(x) \text{ is real-rooted.}
    \]
    Moreover, the roots of $\Chow_{\Ptop^\ast}(x)$ interlace the roots of $\Chowaug_{\Ptop^\ast}(x)= \Chowaug_{\Ptop}(x)$.
\end{theorem}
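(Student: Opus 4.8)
The plan is to run parallel to the proof of Theorem~\ref{thm: main thm chows are rr}, adding the bookkeeping needed for the interlacing. We first treat the identity $\Chowaug_{\Ptop^\ast}(x)=\Chowaug_{\Ptop}(x)$, which is the easy part: the augmented Chow polynomial is assembled from the flag $h$-numbers $\beta_{\Ptop}(S)$ as $S$ runs over the isolated subsets of the full rank set $\{1,\dots,n\}$ of $\Ptop$, each weighted by $x^{|S|}(1+x)^{n+1-2|S|}$, which depends on $S$ only through $|S|$. Order-reversal of $\Ptop$ induces the involution $w\colon r\mapsto n+1-r$ on ranks, and directly from the definitions one gets $\alpha_{\Ptop^\ast}(T)=\alpha_{\Ptop}(w(T))$, hence $\beta_{\Ptop^\ast}(S)=\beta_{\Ptop}(w(S))$. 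Since $w$ restricts to a cardinality-preserving involution of $\{1,\dots,n\}$ preserving the isolated property, the substitution $S\mapsto w(S)$ turns the defining sum of $\Chowaug_{\Ptop^\ast}$ into that of $\Chowaug_{\Ptop}$. (The same $w$ carries $\{2,\dots,n\}$ onto $\{1,\dots,n-1\}\neq\{2,\dots,n\}$, which is why $\Chow_{\Ptop}$ and $\Chow_{\Ptop^\ast}$ genuinely differ.) We will record this as a short lemma.

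For the real-rootedness of $\Chow_{\Ptop^\ast}$ we reduce to Theorem~\ref{thm: main thm chows are rr}. Every selected rank set occurring in the definition of $\Chow_{\Ptop^\ast}$ avoids the atom rank, and in a simplicial poset every element lies below some facet; hence the flag $h$-numbers of $\Ptop^\ast$ appearing there, and with them $\Chow_{\Ptop^\ast}(x)$ itself, depend on $P$ only through its $h$-vector. Writing $\Chow_{\Ptop^\ast}(x)$ and $\Chow_{\widehat{Q}}(x)$---for a finite graded simplicial poset $Q$ of rank $n$---each as the corresponding combination of the entries of $h(P)$, resp.\ $h(Q)$, against universal polynomials, the equation $\Chow_{\widehat{Q}}(x)=\Chow_{\Ptop^\ast}(x)$ becomes a linear system in $h(Q)$; one checks that it has a solution which is itself a nonnegative (hence simplicial) $h$-vector, so that Theorem~\ref{thm: main thm chows are rr} applied to the corresponding $Q$ gives real-rootedness of $\Chow_{\Ptop^\ast}(x)$.

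It remains to prove $\Chow_{\Ptop^\ast}(x)\preceq\Chowaug_{\Ptop^\ast}(x)$. Splitting the defining sum of $\Chowaug_{\Ptop^\ast}$ according to whether the atom rank $1$ lies in the selected set yields
\[
  \Chowaug_{\Ptop^\ast}(x)=(1+x)\,\Chow_{\Ptop^\ast}(x)+x\,A_{\Ptop}(x),
  \qquad
  A_{\Ptop}(x)=\sum_{\substack{S\subseteq\{3,\dots,n\}\\ S\text{ isolated}}}\beta_{\Ptop^\ast}(\{1\}\cup S)\,x^{|S|}(1+x)^{\,n-1-2|S|}.
\]
The coefficients $\beta_{\Ptop^\ast}(\{1\}\cup S)$ unwind in terms of the nonnegative numbers $f_{n-1}(P)\binom{n}{i}-f_{i-1}(P)$, which presents $A_{\Ptop}$ through a rank-$(n-2)$ simplicial poset $Q'$ with positive $h$-vector, up to a fixed correction polynomial reflecting that each facet of $P$ carries a full Boolean interval. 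An induction on $n$ (with small base cases) should then give that $A_{\Ptop}$ is real-rooted and that $A_{\Ptop}\preceq\Chow_{\Ptop^\ast}$, whereupon the elementary implication that $f\preceq g$ implies $g\preceq(1+x)\,g+x\,f$ (for real-rooted $f,g$ of consecutive degrees with positive leading coefficients) yields $\Chow_{\Ptop^\ast}\preceq\Chowaug_{\Ptop^\ast}$. I expect the main obstacle to be precisely this inductive control of $A_{\Ptop}$: its correction polynomial is not real-rooted on its own, so closure of interlacing families under nonnegative combinations does not suffice, and one must instead exploit the arithmetic of the co-face numbers---in particular the identity $f_0(Q')=n\,f_{n-1}(P)-f_0(P)$, which keeps the outer coefficients of $A_{\Ptop}$ in balance with the middle ones---while arranging the auxiliary posets $Q$ and $Q'$ so that every $h$-vector in sight remains nonnegative.
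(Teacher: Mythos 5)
Your first step (the identity $\Chowaug_{\Ptop^\ast}=\Chowaug_{\Ptop}$ via $\beta_{\Ptop^\ast}(S)=\beta_{\Ptop}(n+1-S)$ and the fact that the weight $x^{|S|}(1+x)^{n+1-2|S|}$ depends only on $|S|$) is correct and matches what the paper uses. The decomposition $\Chowaug_{\Ptop^\ast}=(1+x)\Chow_{\Ptop^\ast}+x\,A_{\Ptop}$ is also correct, and the elementary implication you invoke (if $A_\Ptop\interlace\Chow_{\Ptop^\ast}$ with both real-rooted, then $\Chow_{\Ptop^\ast}\interlace(1+x)\Chow_{\Ptop^\ast}+x\,A_\Ptop$) does follow from \Cref{lma: interlacing sums}. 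But the two load-bearing steps are missing. First, your reduction of real-rootedness to \Cref{thm: main thm chows are rr} requires a simplicial poset $Q$ with nonnegative $h$-vector and $\Chow_{\widehat Q}=\Chow_{\Ptop^\ast}$; in the language of the paper this is the assertion that $\sum_k h_k\,\pSempty{n}{k}{[2,n]}$ always lies in the cone spanned by the polynomials $\pSempty{n}{k}{[1,n-1]}$ (with matching constant term). ``One checks that it has a solution'' is not a proof: this is a nontrivial cone-membership statement about the descent-refined polynomials of \Cref{eq: def p}, it is easy only in small rank where the $\gamma$-data is one-dimensional, and nothing in your outline establishes it in general. Second, and more seriously, the actual new content of the theorem --- the interlacing --- rests entirely on the claims that $A_{\Ptop}$ is real-rooted and that $A_{\Ptop}\interlace\Chow_{\Ptop^\ast}$, which you explicitly concede you cannot establish (``its correction polynomial is not real-rooted on its own, so closure \dots does not suffice''). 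So the proof does not close.

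For comparison, the paper avoids both obstacles by never working with the Chow polynomials directly: \Cref{lma: gamma Chow via abc} writes $\gamma(\Chow_{\Ptop^\ast};x)=\sum_k h_k\,\pSempty{n}{k}{[2,n]}$ and $\gamma(\Chowaug_{\Ptop};x)=\sum_k h_k\,\pSempty{n}{k}{[1,n]}$, the interlacing diagram of \Cref{thm: interlacing diag} (including the diagonal arrow) produces the long interlacing sequence $\bigl(\pSempty{n}{0}{[2,n]},\dots,\pSempty{n}{n}{[2,n]},\p{n}{0}{\{1\}}{[1,n]},\dots,\p{n}{n}{\{1\}}{[1,n]}\bigr)$, whose $h$-weighted first-half sum interlaces the total sum by \Cref{lma: recycle interlacing seq}, giving $\gamma(\Chow_{\Ptop^\ast})\interlace\gamma(\Chowaug_{\Ptop})$; finally \Cref{prop: f interlaces iff gamma} transfers interlacing of $\gamma$-polynomials back to the palindromic Chow polynomials. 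Some substitute for that last transfer principle (or for the interlacing diagram itself) is exactly what your sketch lacks; if you want to salvage your route, the place to invest is a genuine proof that $A_{\Ptop}$ is real-rooted and interlaces $\Chow_{\Ptop^\ast}$, which is essentially of the same difficulty as the paper's \Cref{thm: interlacing diag}.
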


\begin{example}
\label{ex:U34}
    Let $M:=U_{3,4}$ be the uniform matroid of rank $3$ on $E=\{1,2,3,4\}$.  
    Denote by $\mathcal L(M)$ its lattice of flats
    \begin{center}
        \begin{tikzpicture}[scale=0.9,
                every node/.style={draw, circle, inner sep=1pt},
                baseline=(E.base)]
          %-------------------%
          % rank 0
          \node (0) at (0,0) {$\varnothing$};
        
          % rank 1
          \node (1) at (-3,2) {$1$};
          \node (2) at (-1,2) {$2$};
          \node (3) at ( 1,2) {$3$};
          \node (4) at ( 3,2) {$4$};
        
          % rank 2
          \node (12) at (-5,4) {$12$};
          \node (13) at (-3,4) {$13$};
          \node (14) at (-1,4) {$14$};
          \node (23) at ( 1,4) {$23$};
          \node (24) at ( 3,4) {$24$};
          \node (34) at ( 5,4) {$34$};
        
          % rank 3 (top element)
          \node (E)  at (0,6) {$1234$};
        
          %-------------------%
          % cover relations
          % bottom-to-rank-1
          \foreach \x in {1,2,3,4}
            \draw (0)--(\x);
        
          % rank-1-to-rank-2
          \draw (1)--(12) (1)--(13) (1)--(14);
          \draw (2)--(12) (2)--(23) (2)--(24);
          \draw (3)--(13) (3)--(23) (3)--(34);
          \draw (4)--(14) (4)--(24) (4)--(34);
        
          % rank-2-to-top
          \foreach \x in {12,13,14,23,24,34}
            \draw (\x)--(E);
        \end{tikzpicture}
    \end{center}
    and let~$P = \mathcal L(M)\setminus\{E\}$, so that $\Ptop = \mathcal L(M)$.
    Then~$P$ is clearly simplicial of rank~$n=2$ with the desired properties and we obtain the flag $f$-vector
    \[
      \alpha_{\Ptop}(\varnothing)=1,\quad
      \alpha_{\Ptop}(\{1\})=4,\quad
      \alpha_{\Ptop}(\{2\})=6,\quad
      \alpha_{\Ptop}(\{1,2\})=12
    \]
    and the flag $h$-vector
    \[
      \beta_{\Ptop}(\varnothing)=1,\quad
      \beta_{\Ptop}(\{1\})=3,\quad
      \beta_{\Ptop}(\{2\})=5,\quad
      \beta_{\Ptop}(\{1,2\})=3\,.
    \]
    Moreover, the $f$-vector of~$P$ is $(1,4,6)$ and the $h$-vector is $(1,2,3)$\,.
    The flag $h$-vector of $\Ptop^\ast$ is given 
    by~$\beta_{\Ptop^\ast}(S)=\beta_{\Ptop}(\set{3-s}{s\in S})$.
    Considering the isolated subsets of $\{2\}$ and of $\{1,2\}$, respectively, we get
    \begin{align*}
      \Chow_{\Ptop}(x)
        &=\beta_\Ptop(\varnothing) \cdot (1+x)^2+\beta_\Ptop({\{2\}})\,x
          =(1+x)^2+5x
          =x^2+7x+1 \\
          &= \left(x+\tfrac{7-3\sqrt5}{2}\right)\cdot\left(x+\tfrac{7+3\sqrt5}{2}\right),\\[6pt]
      \Chow_{\Ptop^\ast}(x)
        &=\beta_{\Ptop^\ast}(\varnothing) \cdot (1+x)^2+\beta_{\Ptop^\ast}({\{2\}})\,x
          =(1+x)^2+3x
          =x^2+5x+1 \\
          &= \left(x+\tfrac{5-\sqrt{21}}{2}\right)\cdot\left(x+\tfrac{5+\sqrt{21}}{2}\right),\\[6pt]
      \Chowaug_{\Ptop}(x)
        &=\beta_\Ptop(\varnothing) \cdot (1+x)^3
          +\biggl(\beta_\Ptop({\{1\}})+\beta_\Ptop({\{2\}})\biggr)x(1+x)\\
        &= (1+x)^3+3x(1+x)+5x(1+x)
          =x^3+11x^2+11x+1\\
        %&=(x+1)\biggl(x^2+10x+1\biggr)\\
        &=(x+1)\cdot(x+5-2\sqrt6)\cdot(x+5+2\sqrt6)\,.
    \end{align*}
    We finally observe that
    \[
      -5-2\sqrt6 < \tfrac{-7-3\sqrt5}{2} < \tfrac{5-\sqrt{21}}{2} < -1 
      < \tfrac{5+\sqrt{21}}{2} < \tfrac{-7+3\sqrt5}{2} < -5+2\sqrt6\,,
    \]
    so the roots of $\Chow_{\Ptop^\ast}(x)$ indeed interlace the roots of $\Chowaug_\Ptop(x)$. 
    Moreover, we observe that also the roots of $\Chow_{\Ptop}(x)$ interlace the roots of $\Chowaug_\Ptop(x)$, compare \Cref{conj: interlace and neg h} below.
\end{example}

For later usage, we record a relationship between the flag $h$-vector of $\Ptop$ and the $h$-vector of~$P$ that holds for simplicial posets~$P$, namely
\begin{equation}
    \beta_{\Ptop}(S)
     = \sum_{k=0}^{n} h_{k} \cdot
     \#\big\{w\in\mathfrak{S}_{n+1} \mid w(1)=k+1, \ \Des(w)=n+1-S \big\}\,,
\label{eq:flaghsimplicial}
\end{equation}
where $n+1-S = \set{n+1-s}{s\in S}$.

\begin{example}
    We continue the above example, where we have $n=2$.
    We thus obtain
    \begin{align*}
        \beta_\Ptop(\varnothing) &= 1 \cdot \#\{123\} = 1 \\
        \beta_\Ptop(\{1\})   &= 1 \cdot \#\{132\} + 2 \cdot \#\{231\} = 3 \\
        \beta_\Ptop(\{2\})   &= 2 \cdot \#\{213\} + 3 \cdot \#\{312\} = 5 \\
        \beta_\Ptop(\{1,2\}) &= 3 \cdot \#\{321\} = 3 \\
    \end{align*}
\end{example}

The relation \eqref{eq:flaghsimplicial} will later ensure the real-rootedness of Chow polynomials when the $h$-vector is positive.
However, we were not able to find a counterexample otherwise.

\begin{conjecture}\label{conj: interlace and neg h}
    Let $P$ be a simplicial poset. Then
    \[
        \Chow_\Ptop (x),\, \Chow_{\Ptop^\ast} (x) \text{ and } \Chowaug_\Ptop (x) \text{ are real-rooted.}
    \]
    Moreover, the roots of both $\Chow_\Ptop (x)$ and $\Chow_{\Ptop^\ast} (x)$ interlace the roots of $\Chowaug_\Ptop (x) = \Chowaug_{\Ptop^\ast} (x)$.
\end{conjecture}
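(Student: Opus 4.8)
The plan is to recast \Cref{conj: interlace and neg h} as a statement about one fixed finite family of ``universal'' polynomials together with the admissible $h$-vectors of simplicial posets. For $0\le k\le n$ set
\[
  \cA_{n,k}(x)=\sum_{\substack{S\subseteq\{2,\dots,n\}\\ S\text{ isolated}}}
    \#\big\{w\in\mathfrak{S}_{n+1}\mid w(1)=k+1,\ \Des(w)=n+1-S\big\}\cdot x^{|S|}(1+x)^{n-2|S|}\,,
\]
and let $\cA^{\mathrm{aug}}_{n,k}$ and $\cA^{\ast}_{n,k}$ be defined in the same way, now summing over isolated $S\subseteq\{1,\dots,n\}$ with weight $(1+x)^{n+1-2|S|}$ for the augmented case, and with $\Des(w)=S$ in place of $\Des(w)=n+1-S$ for the dual. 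Substituting~\eqref{eq:flaghsimplicial} into the definitions of $\Chow_\Ptop$ and $\Chowaug_\Ptop$ and using that $\beta_\Ptop$ depends linearly on $h=(h_0,\dots,h_n)$ yields
\[
  \Chow_\Ptop(x)=\sum_{k=0}^{n}h_k\,\cA_{n,k}(x)\,,\qquad
  \Chowaug_\Ptop(x)=\sum_{k=0}^{n}h_k\,\cA^{\mathrm{aug}}_{n,k}(x)\,,\qquad
  \Chow_{\Ptop^\ast}(x)=\sum_{k=0}^{n}h_k\,\cA^{\ast}_{n,k}(x)\,.
\]
Thus \Cref{conj: interlace and neg h} is equivalent to the assertion that for every integer vector $h$ occurring as the $h$-vector of a rank-$n$ simplicial poset, these three linear combinations are real-rooted and the two Chow polynomials are $\preceq$ the augmented one. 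When $h\ge 0$ the real-rootedness and the interlacing $\Chow_{\Ptop^\ast}\preceq\Chowaug_\Ptop$ are precisely \Cref{thm: main thm chows are rr,thm: dual chow rr}: one checks that $(\cA_{n,k})_k$, $(\cA^{\mathrm{aug}}_{n,k})_k$ and $(\cA^{\ast}_{n,k})_k$ are interlacing sequences, with enough compatibility between the dual and the augmented one, and concludes via the classical fact that a nonnegative combination of an interlacing sequence is again real-rooted and interlaces accordingly. The remaining interlacing $\Chow_\Ptop\preceq\Chowaug_\Ptop$ — which is new already for $h\ge 0$, since the proof of \Cref{thm: dual chow rr} only supplies the dual version — would follow by establishing the corresponding compatibility between $(\cA_{n,k})_k$ and $(\cA^{\mathrm{aug}}_{n,k})_k$.

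For the general case the plan rests on two further inputs. The first is Stanley's characterization of the $f$-vectors, equivalently the $h$-vectors, of simplicial posets~\cite{stanleyCombComm}, which should in particular exhibit the $h$-vectors of rank-$n$ simplicial posets as the lattice points of an explicitly described pointed region: every such vector then equals $h(\Delta^{n-1})=(1,0,\dots,0)$ plus a nonnegative integer combination of finitely many ``generating increments'' $v_1,\dots,v_r$. The second is additivity of the $h$-vector under disjoint union at $\hat 0$, $h(P_1\sqcup P_2;x)=h(P_1;x)+h(P_2;x)-(1-x)^n$, which via linearity of $h\mapsto\Chow_\Ptop$ says precisely how the Chow, augmented Chow and dual Chow polynomials change under attaching a disjoint simplex — and, combined with the simpler moves of adjoining a parallel facet or a new vertex with incident faces, shows that the $v_j$ can be realized by such elementary operations. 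Writing $B_j$, $B^{\mathrm{aug}}_j$, $B^{\ast}_j$ for the images of $v_j$ under the three transforms, the conjecture reduces to showing that $\{\cA_{n,0}\}\cup\{B_j\}$, and likewise the augmented and the dual versions, are compatible families — each member real-rooted, any two with a common interlacer — with enough compatibility across the three to force $\Chow_\Ptop\preceq\Chowaug_\Ptop$ and $\Chow_{\Ptop^\ast}\preceq\Chowaug_\Ptop$; for then every nonnegative combination, in particular every $\Chow_\Ptop$, $\Chowaug_\Ptop$ and $\Chow_{\Ptop^\ast}$, is real-rooted with the required interlacings. In rank $n=2$ this is transparent: each of the three elementary moves sends $\Chow_\Ptop$ to $\Chow_\Ptop+x$, so $\Chow_\Ptop=(1+x)^2+Nx$ with $N\ge 0$, which has real roots for all $N\ge 0$.

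I expect the principal obstacle to be exactly this compatibility, proved uniformly in $n$. All the standard machinery for real-rootedness — interlacing sequences, compatible polynomials, multiplier sequences, Lorentzian and stability-preserving methods — produces closure only under \emph{nonnegative} combinations, whereas a general simplicial $h$-vector has genuinely negative entries (equivalently, the disjoint-union identity carries the subtraction $-(1-x)^n$), so one really must exhibit, for every $n$, a single common interlacer for the whole set $\{\Chow_\Ptop\mid P\text{ simplicial of rank }n\}$ — equivalently, show that the linear map $h\mapsto\Chow_\Ptop$ sends the \emph{non-convex} set of simplicial $h$-vectors into the real-rooted locus with uniformly controlled roots. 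Two routes seem worth pursuing: make Stanley's characterization explicit enough to list the generating increments $v_j$ and verify the compatibility of the $B_j$ structurally; or induct on $n$ through the vertex links $\widehat{\operatorname{lk}_P(v)}$, which are again simplicial posets of rank $n-1$, carrying the full statement of \Cref{conj: interlace and neg h} — both interlacings, for $P$ and for $P^\ast$ — as the inductive hypothesis, strong enough that the cancellations can be absorbed at each step. A subsidiary hurdle is to obtain Stanley's characterization in a form adapted to the Chow transform and to pin down the finitely many $v_j$ and their images for each $n$.
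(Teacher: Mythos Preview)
The statement you are addressing is \Cref{conj: interlace and neg h}, which the paper records as a \emph{conjecture}; the authors explicitly say they ``were not able to find a counterexample'' and give no proof. There is therefore nothing in the paper to compare your argument against, and your text is---as you yourself frame it---a strategy rather than a proof.

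The strategy has a genuine gap at its core, and you identify it correctly: every step that would actually establish real-rootedness or interlacing is deferred to an unproved ``compatibility'' assertion. Two places where this bites concretely. First, the reduction to finitely many generating increments $v_j$ presupposes that the $h$-vectors of rank-$n$ simplicial posets form the lattice points of a translated finitely generated cone, and that the images $B_j$ of the $v_j$ under the Chow transform lie in $\RR_{\ge 0}[x]$; neither is verified, and for increments $v_j$ with entries of mixed sign there is no reason the associated $B_j$ should have nonnegative coefficients, so \Cref{lma: interlacing sums,lma: recycle interlacing seq} do not even apply. Second, even if the reduction went through, exhibiting a common interlacer for $\{\cA_{n,0}\}\cup\{B_j\}$ is not an auxiliary lemma but the conjecture itself repackaged: such an interlacer would simultaneously interlace every $\Chow_\Ptop$, and nothing in your outline (or in the paper's interlacing diagram \Cref{thm: interlacing diag}, which is calibrated to nonnegative $h$) produces one. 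The link-induction alternative faces the same obstruction, since the disjoint-union identity you quote reintroduces the signed term $-(1-x)^n$ at every step. Finally, note that even in the $h\ge 0$ regime the interlacing $\Chow_\Ptop\preceq\Chowaug_\Ptop$ is left open by the paper---\Cref{thm: dual chow rr} only gives the dual version, because the diagonal arrow in \Cref{thm: interlacing diag} runs from the row with index set $T\setminus\{1\}$, not $T\setminus\{n\}$---so your remark that this part ``would follow by establishing the corresponding compatibility'' again names the missing ingredient without supplying it.
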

Conjectures about the interlacing property had also been made by other authors, see e.g. \cite[Remark 4.28]{ferroni2024chowfunctionspartiallyordered}.
We conclude this section by relating our main result to existing results in the literature.

\subsubsection*{Chow polynomials for uniform matroids}

The lattice of flats of the uniform matroid $U_{k,n}$ has coatoms given by all $(k-1)$-subsets of $\{1,\dots,n\}$.
It is thus obtained from a graded, simplical poset by adding a top element.
Our main result thus recovers~\cite[Theorem~1.10]{FMSV2024} where it was shown that its augmented Chow polynomial is real-rooted and~\cite{braendenvecchi2025} where it was shown that its Chow polynomial is real-rooted.

\subsubsection*{Chain polynomials of simplicial posets}

In~\cite{athanasiadis2024classesposetsrealrootedchain} it was shown that the chain polynomial of $\Ptop$ is real-rooted, if $P$ is a finite graded simplical poset with positive $h$-vector.
Our argument for the real-rootedness of the Chow polynomials is based on the same initial rewriting of the flag $h$-vector in~\Cref{eq:flaghsimplicial},
but the families of interlacing polynomials we use is rather different from theirs.
Our approach relies on a decomposition of Chow polynomials into palindromic polynomials using the $\gamma$-expansion. 
In particular, for the Boolean lattice $\Ptop = B_n$ of rank $n$, this yields a decomposition of both the Eulerian polynomials and the binomial Eulerian polynomials into interlacing palindromic polynomials, which provides an alternative proof of their real-rootedness.

\subsubsection*{Chow polynomials of Cohen-Macaulay posets}

It is conjectured in~\cite[Conjecture~4.26]{ferroni2024chowfunctionspartiallyordered} that the Chow polynomials of Cohen-Macaulay posets are real-rooted.
Our main results proves this conjecture in the case of Cohen-Macaulay simplicial posets as Cohen-Macaulay posets are known to be $h$-positive~\cite[Theorem 6.4]{stanleyCombComm}.
It is moreover conjectured in~\cite[Remark 4.28]{ferroni2024chowfunctionspartiallyordered} that the Chow polynomial of a Cohen-Macaulay poset interlaces the augmented Chow polynomial.
Our main result again proves this conjecture in the case of the dual of Cohen-Macaulay simplicial posets.

\subsubsection*{Chow and chain polynomials of geometric lattices}

Geometric lattices are the lattices of flats of matroids.
It is conjectured that Chow polynomials of geometric lattices are real-rooted~\cite[Conjecture~8.18]{ferroni2023valuativeinvariantslargeclasses} and that chain polynomial of geometric lattices are real-rooted~\cite[Conjecture~1.2]{MR4565299}.
Our main result covers only very particular cases of the first conjecture.

%%%%%%%%%%%%%%%%%%%%%%%%%%%%%%%%%%%%%%%%%%%%%%%%%%%%%%%%%%
\section{Properties of real-rooted polynomials}
\label{subsec: rr}
%%%%%%%%%%%%%%%%%%%%%%%%%%%%%%%%%%%%%%%%%%%%%%%%%%%%%%%%%%

A polynomial is \Dfn{real-rooted} if it has only real roots.
This section recalls properties of real-rooted polynomials from~\cite{fisk2008polynomialsrootsinterlacing} and~\cite{brändén2014unimodalitylogconcavityrealrootedness}.
\[
  f = \prod(x-\alpha_i),\quad g = \prod(x-\beta_i) \in \RR_{\geq 0}[x] 
\]
be two real-rooted polynomial with nonnegative coefficients\footnote{Many of the properties referenced in this paper originally only ask for a positive leading term.} and weakly decreasing roots
\[ 
\alpha_1 \geq \alpha_2 \geq \dots  \qquad \text{and} \qquad \beta_1 \geq \beta_2 \geq \dots \,.
\]
Then $f$ \Dfn{interlaces} $g$, denoted by $f \interlace g$ if $\deg g \in \{ \deg f , \deg f +1\}$ and the roots of~$f$ interlace the roots of~$g$, 
\[
  \beta_1 \geq \alpha_1 \geq \beta_2 \geq \alpha_2 \geq \dots \,.
\] 
By convention, any two polynomials of degree 0 or 1 interlace, $0\interlace 0$, $0 \interlace f$, and $f\interlace 0$ for all real-rooted polynomials $f$.
We collect the following elementary properties for sums of interlacing polynomials, see for example~\cite[Chapter 3]{fisk2008polynomialsrootsinterlacing}.

\begin{lemma}
\label{lma: interlacing sums}
    Let $f,g,h \in \RR_{\geq 0}[x]$ be real-rooted polynomials.
    \begin{enumerate}
        \item If $f,g \interlace h$, then $f+g \interlace h$.
        \item If $f \interlace g,h$, then $f \interlace g+h$.
        \item If $f \interlace g$, then $g \interlace x \cdot f$.
    \end{enumerate}
\end{lemma}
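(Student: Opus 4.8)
The three statements are classical; see \cite[Chapter~3]{fisk2008polynomialsrootsinterlacing}. My plan is to reduce everything to an elementary sign-change count. After rescaling I may assume all polynomials involved are monic, and since they lie in $\RR_{\geq 0}[x]$ all of their roots are $\leq 0$. I would first treat the generic situation in which all roots in sight are simple and all interlacing inequalities are strict, and record the following fact: if $h$ is monic of degree $n$ with roots $\beta_1 > \dots > \beta_n$ and $\phi$ is monic and real-rooted with $\phi \interlace h$ strictly, then the strict interlacing forces exactly $j-1$ roots of $\phi$ to lie above $\beta_j$, so $\operatorname{sign}\phi(\beta_j) = (-1)^{j-1}$ for each $j$.

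For (1), assume $f, g \interlace h$. Comparing degrees gives $\deg f, \deg g \in \{n-1, n\}$, so $d := \deg(f+g) = \max(\deg f,\deg g) \in \{n-1,n\}$ (the leading terms do not cancel), which is the degree requirement for $f+g \interlace h$. By the fact above $f(\beta_j)$ and $g(\beta_j)$ are nonzero of the same sign $(-1)^{j-1}$, hence $(f+g)(\beta_j)$ has sign $(-1)^{j-1}$ as well. Thus $f+g$ changes sign on each of the $n-1$ intervals $(\beta_{j+1},\beta_j)$, and a short count — comparing the sign of $f+g$ at $\beta_1$ with its sign at $+\infty$, and using that the parity linking its sign at $\beta_n$ to its sign at $-\infty$ is controlled by $d$ — shows that these $n-1$ roots together with exactly $d-(n-1)\in\{0,1\}$ further roots below $\beta_n$, and none above $\beta_1$, are all the roots of $f+g$, so that $f+g \interlace h$. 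Statement (2) is proved by the same device with the roles of the inner and outer polynomial exchanged: assuming $f \interlace g$ and $f \interlace h$ with $\deg f = k$ and roots $\alpha_1 > \dots > \alpha_k$, one gets $\operatorname{sign} g(\alpha_j) = \operatorname{sign} h(\alpha_j) = (-1)^{j}$, hence $(g+h)(\alpha_j)$ has sign $(-1)^j$, and the analogous count on the intervals determined by the $\alpha_j$ and at $\pm\infty$ produces a root of $g+h$ above $\alpha_1$, one in each $(\alpha_{j+1},\alpha_j)$, and one below $\alpha_k$ exactly when $\deg(g+h) = k+1$; that is $f \interlace g+h$.

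Statement (3) requires no sign analysis and is the only place where nonnegativity of the coefficients is essential. Since the roots of $f$ are all $\leq 0$, the roots of $xf$ listed in decreasing order are $0 \geq \alpha_1 \geq \alpha_2 \geq \dots$, and the largest root $\beta_1$ of $g$ satisfies $\beta_1 \leq 0$. Feeding $\beta_1 \leq 0$ into the interlacing $f \interlace g$, namely $\beta_1 \geq \alpha_1 \geq \beta_2 \geq \alpha_2 \geq \dots$, yields $0 \geq \beta_1 \geq \alpha_1 \geq \beta_2 \geq \alpha_2 \geq \dots$, which is precisely the statement that the roots of $g$ interlace the roots of $xf$; as $\deg(xf) = \deg f + 1 \in \{\deg g, \deg g+1\}$, this means $g \interlace xf$.

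It remains to handle the degenerate cases — repeated roots, roots common to the two polynomials, and degrees $\leq 1$ — which I would settle using the stated conventions together with a limiting argument: perturb $f, g, h$ to real-rooted polynomials with simple and strictly interlacing roots, apply the above, and let the perturbation tend to $0$, using that real-rootedness and the weak interlacing inequalities are closed conditions on the coefficients and that $(f,g)\mapsto f+g$ and $f\mapsto xf$ are continuous. The main obstacle is not conceptual but bookkeeping: carrying the two possible degrees through the counts in (1) and (2) without sign errors, and making the perturbation step clean; everything else is the elementary intermediate-value argument sketched above.
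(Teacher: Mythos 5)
The paper does not prove this lemma at all --- it is quoted as a collection of elementary facts with a pointer to \cite[Chapter~3]{fisk2008polynomialsrootsinterlacing} --- so there is no in-paper argument to compare with. What you propose is the standard sign-change/intermediate-value proof of these classical statements, and in the generic case it is correct: with positive leading coefficients and strict interlacing, $f(\beta_j)$ and $g(\beta_j)$ both have sign $(-1)^{j-1}$ in (1), $g(\alpha_j)$ and $h(\alpha_j)$ both have sign $(-1)^{j}$ in (2), and the count of forced sign changes together with the behaviour at $\pm\infty$ accounts for all roots of the sum; your argument for (3) is the right direct observation, and it is indeed the only place where nonnegativity of the coefficients enters.

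Two points need more care than your sketch gives them. First, in (1) and (2) you may not normalize all polynomials to be monic: the hypotheses are invariant under rescaling $f$ and $g$ separately, but the conclusion about $f+g$ (resp.\ $g+h$) is not. This is harmless, since your sign computation only uses positivity of the leading coefficients and therefore proves the stronger (and actually used) fact that every nonnegative combination inherits the interlacing, but the reduction should be dropped or rephrased. Second, the limiting argument takes care of repeated and common roots, but it cannot reach the cases that hold only by the stated conventions (zero polynomials, degrees $0$ and $1$), because those are not limits of strictly interlacing pairs with the required degree relation; they must be checked directly. When doing so, be aware that the convention ``any two polynomials of degree $0$ or $1$ interlace'' is strictly more generous than root interlacing, and taken literally it even breaks (2) in a corner case: for $f=x+2$, $g=5$, $h=(x+1)(x+3)$ one has $f\interlace g$ by convention and $f\interlace h$ genuinely, yet $g+h=x^2+4x+8$ is not real-rooted. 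Such configurations never occur in the paper's applications, where the degrees are always those forced by root interlacing, but your write-up should either exclude them explicitly or prove the three statements for the root-interlacing relation only and treat the conventions as covering just the trivial instances.
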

The relation of interlacing polynomials is not transitive and a sequence~$(f_1,\dots, f_n)$ is an \Dfn{interlacing sequence} if all polynomials are real-rooted, and $f_i \interlace f_j$ for $i < j$.

\begin{lemma}[\cite{WAGNER1992459}]
\label{lma: interlacing seq}
    Let $f_1, f_2, \dots, f_n \in \RR_{\geq 0}[x]$ be real-rooted polynomials, $f_i \not\equiv 0$ for all $i$.
    Then~$(f_1,\dots , f_n)$ is an interlacing sequence if and only if $f_i \interlace f_{i+1}$ for~$1 \leq i<n$ and $f_1 \interlace f_n$.
\end{lemma}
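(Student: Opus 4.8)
The reverse implication is immediate from the definition: an interlacing sequence satisfies $f_i \interlace f_j$ for all $i<j$, in particular for $j=i+1$ and for $(i,j)=(1,n)$. For the forward implication the plan is to compare, for each pair $i<j$, the ordered roots of $f_i$ and $f_j$ directly, using the consecutive interlacings as short-range control and the single relation $f_1\interlace f_n$ as a ``return trip'' that supplies the remaining inequalities.

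Write the roots of $f_k$ --- all real and $\le 0$, since $f_k\in\RR_{\ge 0}[x]$ is real-rooted and nonzero --- in weakly decreasing order $\rho^{(k)}_1\ge\rho^{(k)}_2\ge\cdots\ge\rho^{(k)}_{d_k}$, where $d_k=\deg f_k$, and set $\rho^{(k)}_t=-\infty$ for $t>d_k$. We may assume $d_k\ge 2$ for all $k$, the low-degree cases (all $f_k$ of degree at most one, where every pair interlaces by convention) being elementary. With this setup, for $f,g$ of degree at least $2$ with $\deg f\le\deg g\le\deg f+1$, the relation $f\interlace g$ unravels exactly to the scalar inequalities
\[
  \rho^{(g)}_t \ \ge\ \rho^{(f)}_t \ \ge\ \rho^{(g)}_{t+1}\qquad\text{for all }t\ge 1 .
\]

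First I would settle the degrees. Each $f_i\interlace f_{i+1}$ forces $d_{i+1}\in\{d_i,d_i+1\}$, so $d_1\le d_2\le\cdots\le d_n$, while $f_1\interlace f_n$ forces $d_n\le d_1+1$; hence every $d_k$ lies in $\{d_1,d_1+1\}$, and $d_j-d_i\in\{0,1\}$ for all $i<j$, which is the degree condition needed for $f_i\interlace f_j$. For the root interleaving, applying the inequality $\rho^{(g)}_t\ge\rho^{(f)}_t$ along the chain $f_i\interlace f_{i+1}\interlace\cdots\interlace f_j$ gives
\[
  \rho^{(j)}_t \ \ge\ \rho^{(j-1)}_t\ \ge\ \cdots\ \ge\ \rho^{(i)}_t\qquad\text{for all }t\ge 1 .
\]
For the complementary inequalities $\rho^{(i)}_t\ge\rho^{(j)}_{t+1}$ I would route through the endpoints: the chain $f_1\interlace\cdots\interlace f_i$ gives $\rho^{(i)}_t\ge\rho^{(1)}_t$, the relation $f_1\interlace f_n$ gives $\rho^{(1)}_t\ge\rho^{(n)}_{t+1}$, and the chain $f_j\interlace\cdots\interlace f_n$ gives $\rho^{(n)}_{t+1}\ge\rho^{(j)}_{t+1}$, so that, concatenating,
\[
  \rho^{(i)}_t \ \ge\ \rho^{(1)}_t \ \ge\ \rho^{(n)}_{t+1}\ \ge\ \rho^{(j)}_{t+1}\qquad\text{for all }t\ge 1 .
\]
Combined with $d_j-d_i\in\{0,1\}$, these two displayed families are precisely the assertion $f_i\interlace f_j$, so $(f_1,\dots,f_n)$ is an interlacing sequence.

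The step I expect to cause the only real friction is bookkeeping rather than mathematics: making the $-\infty$ padding and the special convention for polynomials of degree at most one interact cleanly with the unravelled description of $\interlace$, and tracking the boundary indices $t$ near $d_i$ and $d_j$ (so that the interleaving terminates correctly when $d_j=d_i$ versus $d_j=d_i+1$). Everything else is transitivity of $\ge$. One could alternatively assemble the argument from \Cref{lma: interlacing sums}, but the direct root comparison is shorter and makes transparent why a single extra hypothesis suffices: $f_1\interlace f_n$ is used exactly once, to provide the ``return'' inequality $\rho^{(1)}_t\ge\rho^{(n)}_{t+1}$.
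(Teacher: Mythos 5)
The paper does not prove this lemma at all---it is quoted from Wagner's paper---so there is no in-paper argument to match; your direct root-comparison proof is essentially the standard one, and its core is sound: consecutive (genuine) interlacings force $d_1\le\cdots\le d_n\le d_1+1$, chaining the inequalities $\rho^{(k+1)}_t\ge\rho^{(k)}_t$ gives one half of the interleaving for every pair $i<j$, and the single relation $f_1\interlace f_n$ supplies the return inequality $\rho^{(1)}_t\ge\rho^{(n)}_{t+1}$ needed for the other half. That part is correct and complete.

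The genuine gap is the reduction ``we may assume $d_k\ge 2$ for all $k$, the low-degree cases (all $f_k$ of degree at most one) being elementary.'' This dichotomy is not exhaustive: under the stated hypotheses the degrees can be mixed, e.g.\ some $f_k$ of degree $1$ and others of degree $2$, and this is precisely where your argument breaks, because a consecutive pair of degree-one polynomials interlaces merely by the paper's convention and therefore contributes no root inequality to your chain. Worse, in that regime the statement itself, read literally with the convention, fails: take $f_1=x+2$, $f_2=x+4$, $f_3=(x+2)(x+5)$, $f_4=(x+1)(x+3)$. Then $f_1\interlace f_2$ (by convention, both of degree one), $f_2\interlace f_3$, $f_3\interlace f_4$ and $f_1\interlace f_4$ all hold, yet $f_2\interlace f_4$ fails since $-4<-3$. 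So no amount of bookkeeping closes this case; you must either interpret all interlacings in the hypothesis as genuine root-interlacings (Wagner's setting---then your chaining argument works uniformly in all degrees, with constants handled trivially, and the $d_k\ge 2$ assumption is unnecessary), or explicitly exclude the mixed low-degree configurations from the statement. As written, the claimed reduction is where the proof has a real hole, not merely ``friction.''
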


Interlacing sequences play a fundamental role in proving real-rootedness.
For later usage, we list further fundamental properties for sums of interlacing polynomials, 
following from~\cite[Theorem 8.5]{brändén2014unimodalitylogconcavityrealrootedness}.

\begin{lemma}
\label{lma: recycle interlacing seq}
    Let $(f_1,\dots , f_n)$ be an interlacing sequence.
    \begin{enumerate}
        \item Let $\lambda_1,\dots,\lambda_n \in \RR_{\geq 0}$, 
        then $\sum_i \lambda_i f_i$ is real-rooted and 
        \[
            f_1 \spaceinterlace \sum_{i = 1}^n \lambda_i f_i \spaceinterlace f_n \,.
        \]
        \item The sequence $(p_1, \dots , p_n)$ of lower partial sums,
        \[
        p_k(x) = \sum_{i = 1}^{k} f_i(x) \quad \text{ for } 1\leq k \leq n \,,
        \]
        is an interlacing sequence.
        \item The sequence $(q_1, \dots , q_n)$ of upper partial sums,
        \[
        q_k(x) = \sum_{i = k}^{n} f_i(x) \quad \text{ for } 1\leq k \leq n \,,
        \]
        is an interlacing sequence.
        \item The sequence $(r_1, \dots , r_{n-\ell+1})$ defined by
        \[
        r_k(x) = f_k(x) + \dots + f_{k + \ell }(x) \quad \text{ for } 1\leq k \leq n-\ell \,,
        \] 
        is an interlacing sequence for any $0 \leq \ell < n$.
        \item The sequence $(t_1, t_2, \dots , t_{n+1})$ defined by
        \[
        t_k(x) = x \cdot \sum_{i = 1}^{k-1} f_i(x) + \sum_{i=k}^{n} f_i(x) \quad \text{ for } 1\leq k \leq n+1 \,,
        \]
        is an interlacing sequence.
    \end{enumerate}
\end{lemma}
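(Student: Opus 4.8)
The plan is to deduce items~(2)--(5) from item~(1) by repeated use of \Cref{lma: interlacing sums} and, for items~(4) and~(5), Wagner's criterion \Cref{lma: interlacing seq}. Item~(1) is the only step that needs more than formal manipulation of the relation~$\interlace$: in an interlacing sequence one has $f_1 \interlace f_i \interlace f_n$ for every~$i$ (using $f_1 \interlace f_1$ and $f_n \interlace f_n$), so $f_1$ is a common interlacer of the whole family and $f_n$ a common upper interlacer. The classical fact that nonnegative combinations of polynomials with a common interlacer stay real-rooted (a Hermite--Kakeya--Obreschkoff type statement, see \cite{fisk2008polynomialsrootsinterlacing} and \cite[Theorem~8.5]{brändén2014unimodalitylogconcavityrealrootedness}) then makes $\sum_i \lambda_i f_i$ real-rooted, while iterating parts~(2) and~(1) of \Cref{lma: interlacing sums} yields $f_1 \interlace \sum_i \lambda_i f_i$ and $\sum_i \lambda_i f_i \interlace f_n$ respectively. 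Equivalently, all five items are instances of \cite[Theorem~8.5]{brändén2014unimodalitylogconcavityrealrootedness} for suitable nonnegative coefficient patterns, but I find the bootstrapping below more transparent.

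For items~(2) and~(3) I would argue directly. Every sub-sequence of an interlacing sequence is interlacing, so $p_k$, $q_k$ and $r_k$ are all real-rooted by~(1). Applying~(1) to the sub-sequence $(f_1,\dots,f_m)$ with coefficients $1$ on $\{1,\dots,i\}$ and $0$ elsewhere shows $p_i \interlace f_m$ for every $m \ge i$; since also $p_i \interlace p_i$, iterating \Cref{lma: interlacing sums}(2) along $p_j = p_i + f_{i+1} + \dots + f_j$ gives $p_i \interlace p_j$ for all $i<j$, which is item~(2). Symmetrically, applying~(1) to $(f_m,\dots,f_n)$ with coefficients $0$ on $\{m,\dots,j-1\}$ and $1$ elsewhere shows $f_m \interlace q_j$ for every $m \le j$, and iterating \Cref{lma: interlacing sums}(1) along $q_i = (f_i + \dots + f_{j-1}) + q_j$ gives $q_i \interlace q_j$, which is item~(3).

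Items~(4) and~(5) use one extra device: slot the two polynomials whose interlacing we want to prove into a short \emph{auxiliary} interlacing sequence and then quote Wagner's criterion. For~(4), given $i<j$ the windows $r_i$ and $r_j$ share the block $B := f_j + \dots + f_{i+\ell}$ (possibly empty), and~(1) together with \Cref{lma: interlacing sums} produce the chain $f_i \interlace r_i \interlace B \interlace r_j \interlace f_{j+\ell}$; since $f_i \interlace f_{j+\ell}$, \Cref{lma: interlacing seq} makes $(f_i,r_i,B,r_j,f_{j+\ell})$ an interlacing sequence, whence $r_i \interlace r_j$. For~(5) the new ingredient is the factor $x$, handled via \Cref{lma: interlacing sums}(3): the endpoints satisfy $t_1 = \sum_i f_i \interlace x\sum_i f_i = t_{n+1}$, and for the consecutive relation put $C := x\sum_{i<k} f_i + \sum_{i>k} f_i$, so that $t_k = f_k + C$ and $t_{k+1} = C + x f_k$. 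Here $\sum_{i<k} f_i \interlace f_k$ (a pinching relation from~(1)) gives $f_k \interlace x\sum_{i<k} f_i$ by \Cref{lma: interlacing sums}(3), which together with $f_k \interlace \sum_{i>k} f_i$ yields $f_k \interlace C$, hence $C \interlace x f_k$ by \Cref{lma: interlacing sums}(3) and $f_k \interlace x f_k$ likewise; thus $(f_k, C, x f_k)$ is an interlacing sequence, and expanding $t_k = f_k + C$ and $t_{k+1} = C + x f_k$ with parts~(1) and~(2) of \Cref{lma: interlacing sums} gives the chain $f_k \interlace t_k \interlace C \interlace t_{k+1} \interlace x f_k$. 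Applying Wagner's criterion first to $(f_k,t_k,C,t_{k+1},x f_k)$ and then to $(t_1,\dots,t_{n+1})$ concludes item~(5).

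The genuine obstacle is item~(1): once real-rootedness of nonnegative combinations of a common-interlacer family is available, items~(2)--(5) are bookkeeping with \Cref{lma: interlacing sums} and \Cref{lma: interlacing seq}. Among the latter, items~(4) and~(5) require the most care, since their summands are no longer sub-sequences of $(f_1,\dots,f_n)$ (an overlapping block, resp.\ an extra factor $x$), and it is the passage through an auxiliary interlacing sequence that circumvents the non-transitivity of~$\interlace$.
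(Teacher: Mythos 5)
Your argument is correct, but it is a genuinely different route from the paper's: the paper offers no proof of this lemma at all, introducing it as a direct consequence of the general matrix criterion for preserving interlacing sequences, \cite[Theorem~8.5]{brändén2014unimodalitylogconcavityrealrootedness} (each of the five items is the image of $(f_1,\dots,f_n)$ under a suitable matrix with entries in $\{0,1,x\}$ satisfying that theorem's hypotheses — you note this possibility yourself and set it aside). Your bootstrapping instead stays entirely inside the two tools already stated in Section~2: the ``pinching'' relation $f_1 \interlace \sum_i \lambda_i f_i \interlace f_n$ from item~(1) yields the partial-sum items~(2) and~(3) by iterating parts~(1) and~(2) of \Cref{lma: interlacing sums}, and the auxiliary five-term sequences fed into \Cref{lma: interlacing seq} are a clean device for circumventing the non-transitivity of $\interlace$ in items~(4) and~(5); I checked the chains $f_i \interlace r_i \interlace B \interlace r_j \interlace f_{j+\ell}$ and $f_k \interlace t_k \interlace C \interlace t_{k+1} \interlace x f_k$ and they are all justified by the cited parts of \Cref{lma: interlacing sums}. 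What the citation route buys is brevity and uniformity; what your route buys is a self-contained, elementary proof using only the lemmas the paper already quotes. Two small points to patch: in item~(4), when the windows are disjoint the shared block $B$ is the zero polynomial, and \Cref{lma: interlacing seq} as stated requires nonzero entries, so that case should be handled directly (every summand of $r_i$ interlaces every summand of $r_j$, then iterate parts~(1) and~(2) of \Cref{lma: interlacing sums}); the same caveat applies to the auxiliary sequence in item~(5) if some $f_i \equiv 0$, where the desired relation $t_k \interlace t_{k+1}$ degenerates and can be checked by hand. Finally, the appeal to a Hermite--Kakeya--Obreschkoff-type theorem in item~(1) is redundant: real-rootedness of $\sum_i \lambda_i f_i$ is already contained in the conclusion of the iterated applications of parts~(1) and~(2) of \Cref{lma: interlacing sums}, since the relation $\interlace$ is only asserted between real-rooted polynomials.
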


A polynomial $f(x) = \lambda_0 + \lambda_1 x + \dots \lambda_d x^d \in \RR_{\geq 0}[x]$ of degree $d$ is \Dfn{palindromic} if $\lambda_i = \lambda_{d-i}$ for all~$0\leq i \leq d$.
We next turn to the \Dfn{$\gamma$-expansion} of such a palindromic polynomial~$f$ given by
\[
    f(x) = \sum_{i = 0}^{\lfloor d/2 \rfloor} \gamma_i\cdot x^i\cdot (1+x)^{d-2i}
\]
and its associated \Dfn{$\gamma$-polynomial}
\[
    \gamma(f ; x) = \sum_{i = 0}^{\lfloor d/2 \rfloor} \gamma_i\cdot x^i \,.
\]

Real-rootedness interacts naturally with $\gamma$-expansions.

\begin{lemma}[{\cite[Observation~4.2]{PetersenEulerianNumbers}}]
\label{lma: gamma exp rr}
    Let $f \in \RR_{\geq 0}[x]$ be palindromic.
    Then
    \[
      f \text{ real-rooted} \quad \Longleftrightarrow\quad \gamma (f) \text{ real-rooted}\,.
    \]
\end{lemma}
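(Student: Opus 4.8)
The plan is to derive both implications from an explicit factorization, using the identity that links the monomial coefficients $\gamma_i$ of $\gamma(f)$ to a factored form of $f$. Concretely, for any real numbers $\delta_1,\dots,\delta_m$ with $m\le\lfloor d/2\rfloor$ one has the purely formal expansion
\[
  \sum_{i=0}^{\lfloor d/2\rfloor} e_i(\delta_1,\dots,\delta_m)\,x^i(1+x)^{d-2i}
  \;=\; (1+x)^{d-2m}\prod_{j=1}^m\bigl((1+x)^2+\delta_j x\bigr),
\]
where $e_i$ denotes the $i$-th elementary symmetric polynomial; this is obtained by expanding the product and collecting powers of $1+x$. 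Two elementary observations will do the rest: the quadratic $(1+x)^2+\delta x=x^2+(2+\delta)x+1$ has discriminant $\delta^2+4\delta$, so it is real-rooted whenever $\delta\ge 0$; and the polynomials $x^i(1+x)^{d-2i}$ for $0\le i\le\lfloor d/2\rfloor$ are linearly independent, since they have pairwise distinct degrees $d-i$. Hence the $\gamma$-expansion determines the $\gamma_i$ uniquely, and any factored expression of the form on the right of the display lets us read them off.

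First I would treat the direction ``$\gamma(f)$ real-rooted $\Rightarrow f$ real-rooted''. Since $\gamma(f)\in\RR_{\ge 0}[x]$ is real-rooted, all of its roots are $\le 0$, so $\gamma(f;x)=\gamma_0\prod_{j=1}^m(1+\delta_j x)$ with all $\delta_j\ge 0$; here $\gamma_0\ne 0$ because $\gamma_0$ equals the leading coefficient $\lambda_d$ of $f$, which is nonzero as $\deg f=d$. Comparing coefficients gives $\gamma_i=\gamma_0\,e_i(\delta_1,\dots,\delta_m)$, so the displayed identity yields $f(x)=\gamma_0(1+x)^{d-2m}\prod_{j=1}^m\bigl((1+x)^2+\delta_j x\bigr)$, a product of real-rooted polynomials, hence real-rooted.

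For the converse, suppose $f$ is real-rooted. Having nonnegative coefficients it has only nonpositive roots, and being palindromic its root multiset is invariant under $\rho\mapsto\rho^{-1}$; pairing the reciprocal roots and collecting all $(-1)$-roots into a factor $(1+x)^{d-2m}$ gives $f(x)=\lambda_d(1+x)^{d-2m}\prod_{j=1}^m(x^2+c_jx+1)$ with each $c_j\ge 2$, since $c_j=|\rho_j|+|\rho_j|^{-1}$ for a negative root $\rho_j$. Putting $\delta_j:=c_j-2\ge 0$ this reads $f(x)=\lambda_d(1+x)^{d-2m}\prod_{j=1}^m\bigl((1+x)^2+\delta_j x\bigr)$, so by the displayed identity and the linear independence above we read off $\gamma_i=\lambda_d\,e_i(\delta_1,\dots,\delta_m)$, i.e.\ $\gamma(f;x)=\lambda_d\prod_{j=1}^m(1+\delta_j x)$, which has only the nonpositive roots $-(c_j-2)^{-1}$ and is therefore real-rooted.

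I expect the argument to be essentially free of obstacles; the only delicate point is the degree bookkeeping. One has to be careful that $\gamma_0=\lambda_d\ne 0$, so that comparing coefficients never silently discards a factor of $x$ or $(1+x)$, and that the number $m$ of quadratic factors produced by the root-pairing really satisfies $m\le\lfloor d/2\rfloor$, so that the displayed identity is applicable; both are immediate once one notes that $d-2m$ equals the multiplicity of $-1$ as a root of $f$. Apart from that, everything reduces to the single inequality $\delta\ge 0\Rightarrow\delta^2+4\delta\ge 0$.
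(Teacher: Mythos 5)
Your forward direction (real-rooted $f$ implies real-rooted $\gamma(f)$) is correct: pairing each root $\rho<0$, $\rho\neq -1$, with $1/\rho$, collecting the $(-1)$-roots into $(1+x)^{d-2m}$, and reading off $\gamma_i=\lambda_d\, e_i(\delta_1,\dots,\delta_m)$ from your product identity is the standard argument. It is essentially the same root correspondence $\rho\mapsto \rho/(1+\rho)^2$ that the paper records in \eqref{eq:gammaroots} and \eqref{eq:gammamulti}; note the paper itself does not prove this lemma but cites Petersen's Observation~4.2.

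The converse direction, however, has a genuine gap: you open with ``since $\gamma(f)\in\RR_{\geq 0}[x]$ is real-rooted, all of its roots are $\leq 0$'', but nonnegativity of the coefficients of $\gamma(f)$ is not among the hypotheses --- only $f\in\RR_{\geq 0}[x]$ is assumed --- and it does not follow from them. The step cannot be repaired from the stated hypotheses alone: for $f=1+x+x^2$ one has $\gamma(f;x)=1-x$, which is real-rooted, while $f$ is not; so the factorization $\gamma(f)=\gamma_0\prod_j(1+\delta_j x)$ with all $\delta_j\geq 0$ is precisely the point at which such examples must be excluded, and your proof silently assumes this. In other words, the equivalence needs the additional hypothesis that $\gamma(f)$ has nonnegative coefficients (equivalently, only nonpositive roots); this is how the cited observation is meant, and it is harmless for the paper, since every $\gamma$-polynomial arising there (for instance those produced by \Cref{lma: gamma Chow via abc}) visibly has nonnegative coefficients. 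You should state that extra hypothesis explicitly or justify it in context; once it is granted, your factorization argument for the converse goes through as written.
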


The following proposition extends this to interlacingness.

\begin{proposition}
\label{prop: f interlaces iff gamma}
    Let $f,g \in \RR_{\geq 0}[x]$ be palindromic with $\deg g = \deg f + 1$.
    Then
    \[
      f \interlace g \quad \Longleftrightarrow \quad \gamma (f) \interlace \gamma (g)\,.
    \]
\end{proposition}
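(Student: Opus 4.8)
The plan is to pass through an explicit dictionary between the roots of a palindromic polynomial and the roots of its $\gamma$-polynomial, and then check that the interlacing inequalities for $f$ and $g$ collapse --- thanks to the reciprocal symmetry of palindromic polynomials --- to precisely the interlacing inequalities for $\gamma(f)$ and $\gamma(g)$. First I would dispose of the trivial cases: if $f$ or $g$ is not real-rooted, then by \Cref{lma: gamma exp rr} neither is $\gamma(f)$ resp.\ $\gamma(g)$, so neither $f\interlace g$ nor $\gamma(f)\interlace\gamma(g)$ holds and the equivalence is vacuous; and if $\deg f=0$ then $\deg g=1$, both $\gamma$-polynomials have degree $0$, and both sides hold by the degree-$\le 1$ convention. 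So assume $f,g$ real-rooted of degrees $d\ge 1$ and $d+1\ge 2$. Since they have nonnegative coefficients and are palindromic, all roots are strictly negative and come in reciprocal pairs $\{-\rho,-1/\rho\}$, $0<\rho<1$, together with copies of $-1$; thus I can write
\[
 f(x)=c_f\,(1+x)^{e_f}\prod_{k=1}^{r_f}(x+\rho_k)(x+1/\rho_k),\qquad 0<\rho_1\le\cdots\le\rho_{r_f}<1,
\]
with $c_f>0$, $d=e_f+2r_f$, and similarly $g$ with data $(c_g,e_g,r_g;\sigma_1,\dots,\sigma_{r_g})$ and $\deg g=e_g+2r_g=d+1$.

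Next I would set up the $\gamma$-dictionary. From the definition, $f(x)=(1+x)^{\deg f}\,\gamma\!\big(f;\tfrac{x}{(1+x)^2}\big)$, and comparing this identity for a product of palindromic polynomials shows that $\gamma$ is multiplicative on palindromic polynomials. Since $\gamma(1+x)=\gamma((1+x)^2)=1$ and $\gamma\big((x+\rho)(x+1/\rho)\big)=1+\tfrac{(1-\rho)^2}{\rho}x$, this gives
\[
 \gamma(f;x)=c_f\prod_{k=1}^{r_f}\Big(1+\tfrac{(1-\rho_k)^2}{\rho_k}\,x\Big),
\]
a real-rooted polynomial of degree exactly $r_f$ whose roots, sorted decreasingly, are $\psi(\rho_1)\ge\cdots\ge\psi(\rho_{r_f})$, where $\psi(\rho)=-\rho/(1-\rho)^2$ is a strictly decreasing bijection $(0,1)\to(-\infty,0)$ (indeed $\psi'(\rho)=-(1+\rho)/(1-\rho)^3<0$). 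The same description holds for $\gamma(g)$ in terms of the $\sigma_j$.

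Then I would pin down the combinatorial shape. A short squeezing argument on $\beta_1\ge\alpha_1\ge\beta_2\ge\cdots$ shows that if $f\interlace g$ then $-1$ has multiplicities in $f$ and $g$ differing by at most one, i.e.\ $|e_f-e_g|\le 1$; combined with $e_g+2r_g=e_f+2r_f+1$ and the opposite parities of $e_f,e_g$, this forces $r_g-r_f\in\{0,1\}$. When $r_g-r_f\notin\{0,1\}$ both $f\interlace g$ and $\gamma(f)\interlace\gamma(g)$ fail (the latter since then $\deg\gamma(g)=r_g\notin\{\deg\gamma(f),\deg\gamma(f)+1\}$), so the equivalence holds. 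In the two remaining cases --- (A) $r_g=r_f+1,\ e_g=e_f-1$ and (B) $r_g=r_f,\ e_g=e_f+1$ --- I would list the roots of $f$ and $g$ in decreasing order (a block of $-\rho_k$'s resp.\ $-\sigma_j$'s, then the block of $-1$'s, then the reciprocal block) and read off the interlacing conditions. Every inequality touching the middle $-1$-block reduces to $\rho_k<1$, $\sigma_j<1$, or to an inequality already present, and the inequalities in the reciprocal block are exactly the reciprocals of those in the first block; hence $f\interlace g$ is equivalent to the single chain $\sigma_1\le\rho_1\le\sigma_2\le\rho_2\le\cdots$ (terminating at $\le\sigma_{r_f+1}$ in case (A), at $\le\rho_{r_f}$ in case (B)). Applying the decreasing bijection $\psi$ turns this chain, via the root descriptions of $\gamma(f)$ and $\gamma(g)$ above, into exactly the interlacing relation $\gamma(f)\interlace\gamma(g)$ (with $\deg\gamma(g)-\deg\gamma(f)$ equal to $1$ in case (A) and $0$ in case (B)), which finishes the proof.

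The step I expect to be the main obstacle is the last one: verifying that the interlacing system for $f,g$ really collapses to a single chain in all edge configurations --- few or no copies of $-1$, no reciprocal pairs, and the precise positions where the $-\rho$, $-1$, and $-1/\rho$ blocks abut. One could instead invoke a linear-pencil criterion for interlacing together with the substitution $x\mapsto x/(1+x)^2$, but I would keep the explicit root-tracking, which is transparent and also exhibits why the hypothesis $\deg g=\deg f+1$ is needed.
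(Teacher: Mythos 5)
Your proposal is correct and follows essentially the same route as the paper's proof: both rest on the dictionary sending the roots of a palindromic polynomial lying in $(-1,0)$ to the roots of its $\gamma$-polynomial via a monotone map (your $\psi(\rho)=-\rho/(1-\rho)^2$ is the paper's $\lambda\mapsto\lambda/(1+\lambda)^2$), together with the relation between the multiplicity of $-1$ and $\deg f-2\deg\gamma(f)$, and both reduce to the same two cases $\deg\gamma(g)-\deg\gamma(f)\in\{0,1\}$ (your cases (A) and (B)). Your extra steps — deriving the dictionary from multiplicativity of $\gamma$ and collapsing the interlacing inequalities block by block to the single chain $\sigma_1\le\rho_1\le\sigma_2\le\cdots$ — do check out in all the edge configurations you flag, so this is just a more explicit rendering of the paper's argument.
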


\begin{proof}
    We first consider a general palindromic and real-rooted polynomial $h(x) \in \RR_{\geq 0}[x]$ of degree~$d$.
    Its roots are
    \[ 
        0 > \rho_1 \geq \dots \geq \rho_d
    \]
    with $\rho_i = 1/\rho_{k+1-i}$ for all~$i$, and the roots of $\gamma (h)$ are
    \begin{equation}
        \rho_i / (1+\rho_i)^2 \text{ for } 0 > \rho_i > -1\,.
    \label{eq:gammaroots}
    \end{equation}
    Moreover, the multiplicity of the root $-1$ in~$h$ is
    \begin{equation}
        \mu_h(-1) = d - 2\cdot\deg \gamma (h)\,.
    \label{eq:gammamulti}
    \end{equation}

    \medskip
    By \Cref{lma: gamma exp rr}, we may assume that $f,g,\gamma (f),\gamma (g)$ are all real-rooted and the relation between the roots is as given in~\eqref{eq:gammaroots} and the multiplicities of $-1$ are given in~\eqref{eq:gammamulti}.
    Let
    \[
      f = \prod_{i=1}^d(x-\alpha_i),\quad g = \prod_{i=1}^{d+1}(x-\beta_i)
    \]
    with
    \[ 
        0 > \alpha_1 \geq \dots \geq \alpha_d  \qquad \text{and} \qquad 0 > \beta_1 \geq \dots \geq \beta_{d+1}\,.
    \]

    \medskip
    We first assume that $f \interlace g$.
    Since the function $\lambda \mapsto \tfrac{\lambda}{(1+\lambda)^2}$ is strictly increasing in the open interval $(-1,0)$, the roots of $\gamma (f)$ and of $\gamma (g)$ interlace in the same order as the roots of $f$ and of $g$ interlace in this interval.
    We conclude $\gamma (f) \interlace \gamma (g)$.

    \medskip
    We next assume that $\gamma (f) \interlace \gamma (g)$.
    By the same argument as above, the roots of $\gamma (f)$ and of $\gamma (g)$ interlace in the same order as the roots of $f$ and of $g$ inside the open interval $(-1,0)$.
    We now distinguish the two cases $\deg \gamma (g) \in \{\deg \gamma (f), \deg\gamma (f)+1\}$.

    If $\deg \gamma (g) = \deg \gamma (f) = k$, then we have found $2k$ roots of~$f$ and of~$g$, and $\deg g = \deg f + 1$ implies that $\mu_g(-1) = \mu_f(-1) + 1$.
    We conclude that the roots of~$f$ interlace the roots of~$g$, $f \interlace g$.

    If $\deg \gamma (g) = \deg \gamma (f) + 1 = k$, the we have found $2k$ roots of~$g$ and $2k-2$ roots of~$f$, and $\deg g = \deg f + 1$ implies that $\mu_g(-1) = \mu_f(-1) - 1$.
    We again conclude that the roots of~$f$ interlace the roots of~$g$, $f \interlace g$.
\end{proof}

\begin{example}
    We continue the running example for $\Ptop = \mathcal{L}(U_{3,4})$ and compute
    \begin{align*}
      \Chow_\Ptop(x)        &= 1\cdot x^0\cdot(1+x)^2 + 5\cdot x\cdot(1+x)^0\\
      \Chow_{\Ptop^\ast}(x) &= 1\cdot x^0\cdot(1+x)^2 + 3\cdot x\cdot(1+x)^0\\
      \Chowaug_{\Ptop}(x)   &= 1\cdot x^0\cdot(1+x)^3 + 8\cdot x\cdot(1+x)
    \end{align*}
    so we obtain
    \[
      \gamma(\Chow_\Ptop ; x ) = 1+5\cdot x\,, \quad
      \gamma(\Chow_{\Ptop^\ast} ; x ) = 1+3\cdot x\,, \quad\text{and}\quad
      \gamma(\Chowaug_{\Ptop} ; x ) =1+8\cdot x\,,
    \]
    which interlace as $\gamma(\Chow_\Ptop ; x ) \interlace \gamma(\Chowaug_{\Ptop} ; x )$ 
    and $\gamma(\Chow_{\Ptop^\ast} ; x ) \interlace \gamma(\Chowaug_{\Ptop} ; x )$.
\end{example}

%%%%%%%%%%%%%%%%%%%%%%%%%%%%%%%%%%%%%%%%%%%%%%%%%%%%%%%%%%
\section{A family of interlacing polynomials}
\label{sec: interlacing family}

In this section, we introduce a family of interlacing polynomials that play the crucial role in the proof of \Cref{thm: main thm chows are rr}.
Let $S\subseteq T \subseteq \{1,\dots , n\}$.
Define the polynomial $p_{n,k}^{S\subseteq T}\in \RR_{\geq 0}[x]$ for $0\leq k\leq n$ by
\begin{align}
    \label{eq: def p}
    p_{n,k}^{S\subseteq T} (x)
    = \sum_{\substack{w\in \widehat{\Sym}_{n+1,k+1} \\ S\subseteq \Des(w) \subseteq T}} x^{\des(w)} \,,
\end{align}
where $\widehat{\Sym}_{n+1,k+1} = \set{w\in \Sym_{n+1} }{w(1) = k+1 \,, \ \Des(w) \ \text{isolated}} $.
When the set~$S$ is isolated, this polynomial can be described recursively by
\begin{align*}
    \p{1}{k}{S}{T}(x) = \begin{cases}
        1   & k=0 \text{ and } S = \emptyset \,,\\
        x   & k = 1 \text{ and } T = \{1\} \,,\\
        0   & \text{else} \,,
    \end{cases}
\end{align*}
and 
\begin{align*}
    \p{n}{k}{S}{T}(x) =
        \mathbbm{1}_{ \{1\in T\} } \cdot x \cdot \sum_{j = 0}^{k-1} \p{n-1}{j}{S-1}{(T-1)\setminus \{1\}} (x)   
        +
        \mathbbm{1}_{ \{1\notin S\} } \cdot 
        \sum_{j = k}^{n-1} \p{n-1}{j}{S-1}{T-1} (x)     \,.
\end{align*}
where $S-1 = \set{i-1}{i\in S}\setminus \{1\}$ and $T-1$ analogously.
For $s\in S\subseteq T$, we have 
    \begin{align*}
    \p{n}{k}{S\setminus\{s\}}{T} (x) &= \p{n}{k}{S}{T} (x) + \p{n}{k}{S\setminus\{s\}}{T\setminus\{s\}} (x) \,,
    \end{align*}
since the left side polynomial counts permutations in which $s$ \textbf{can be a descent}, 
while the polynomials on the right side count permutations in which $s$ \textbf{must be a descent} and in which $s$ \textbf{must not be a descent}, respectively.

\smallskip
We write~$\pSempty{n}{k}{T} = \p{n}{k}{\emptyset}{T}$ and~$[a,b] = \{a,a+1,\dots,b\}$.
The following lemma gives a description of the Chow polynomials in terms of the polynomials $\p{n}{k}{S}{T}$ for a particular choice of subsets.

\begin{lemma}\label{lma: gamma Chow via abc}
    Let $P$ be a graded simplicial poset of rank $n\geq 2$ with $h$-vector~$(h_0,\dots , h_n)$.
    Let $\Ptop^\ast$ be the dual poset of $\Ptop$. 
    Then,
    \begin{align*}
        \gamma(\Chow_{\Ptop} ; x ) 
        &= \sum_{k = 0}^{n} 
            h_k \cdot \pSempty{n}{k}{[1,n-1]}(x) \,, \\
        \gamma (\Chowaug_{\Ptop^\ast} ; x )  
        &= \sum_{k = 0}^{n} 
        h_k \cdot \pSempty{n}{k}{[2,n]}(x) \,, \quad \text{and} \\
        \gamma(\Chowaug_\Ptop ; x) 
        = \gamma (\Chowaug_{\Ptop^\ast} ; x )  
        &=\sum_{k = 0}^{n} 
        h_k \cdot \pSempty{n}{k}{[1,n]}(x) \,.
    \end{align*}
\end{lemma}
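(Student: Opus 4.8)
The plan is to exploit that the defining expressions for $\Chow_{\Ptop}$, $\Chowaug_{\Ptop}$ and their duals are already written in $\gamma$-expanded form, and then to rewrite the resulting $\gamma$-polynomials by means of the combinatorial identity~\eqref{eq:flaghsimplicial}. Concretely, each summand $\beta_{\Ptop}(S)\cdot x^{|S|}(1+x)^{n-2|S|}$ of $\Chow_{\Ptop}$ is palindromic with center $n/2$, so $\Chow_{\Ptop}$ is palindromic of degree $n$ and, after collecting summands according to $i=|S|$, the defining formula literally is its $\gamma$-expansion; hence $\gamma(\Chow_{\Ptop};x)=\sum_{S}\beta_{\Ptop}(S)x^{|S|}$, summed over isolated $S\subseteq\{2,\dots,n\}$, and likewise $\gamma(\Chowaug_{\Ptop};x)=\sum_{S}\beta_{\Ptop}(S)x^{|S|}$ over isolated $S\subseteq\{1,\dots,n\}$, with the analogous statements for $\Ptop^\ast$. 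Since passing to the dual reverses ranks, $\alpha_{\Ptop^\ast}(T)=\alpha_{\Ptop}(n+1-T)$, and Möbius inversion gives $\beta_{\Ptop^\ast}(S)=\beta_{\Ptop}(n+1-S)$ with $n+1-S=\{n+1-s\mid s\in S\}$; as $S\mapsto n+1-S$ is an involution preserving cardinality and the isolated property and fixing $\{1,\dots,n\}$ setwise, reindexing the sum for $\Chowaug_{\Ptop^\ast}$ already shows $\Chowaug_{\Ptop^\ast}=\Chowaug_{\Ptop}$.

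The main computation is then uniform across the three cases. Inserting~\eqref{eq:flaghsimplicial} into the $\gamma$-polynomial and interchanging the two summations, the coefficient of $h_k$ becomes $\sum_{U}\#\{w\in\Sym_{n+1}\mid w(1)=k+1,\ \Des(w)=n+1-U\}\,x^{|U|}$, the sum running over isolated $U\subseteq\{2,\dots,n\}$ for $\Chow_{\Ptop}$ and over isolated $U\subseteq\{1,\dots,n\}$ for $\Chowaug_{\Ptop}$ (for $\Chow_{\Ptop^\ast}$ one first replaces $\beta_{\Ptop^\ast}(S)$ by $\beta_{\Ptop}(n+1-S)$ and reindexes, which amounts to one extra reversal). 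Applying $U\mapsto n+1-U$ once more, the admissible $U$ then range exactly over the isolated subsets of $n+1-\{2,\dots,n\}=[1,n-1]$, of $n+1-\{1,\dots,n\}=[1,n]$, and of $[2,n]$ in the three cases, respectively. Using $|\Des(w)|=\des(w)$ together with the fact that summing over all admissible descent sets equals imposing the single condition $\Des(w)\subseteq[a,b]$, this coefficient of $h_k$ is precisely $\pSempty{n}{k}{[a,b]}(x)=\sum_{w\in\widehat{\Sym}_{n+1,k+1},\ \Des(w)\subseteq[a,b]}x^{\des(w)}$. Summing over $k$ yields the three displayed formulas.

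I expect the only real difficulty to be bookkeeping: correctly composing the reversal built into~\eqref{eq:flaghsimplicial} with the additional reversal $\beta_{\Ptop^\ast}=\beta_{\Ptop}(n+1-\cdot)$ in the dual case, and checking that the resulting index sets come out as $[1,n-1]$, $[1,n]$ and $[2,n]$; the remaining checks — that reversal preserves the isolated property, and that the defining expressions for the Chow polynomials genuinely are $\gamma$-expansions — are routine.
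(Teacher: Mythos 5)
Your proposal is correct and follows essentially the same route as the paper: read the defining formulas as $\gamma$-expansions, substitute \eqref{eq:flaghsimplicial}, interchange the summations, reindex by $S\mapsto n+1-S$, identify the coefficient of $h_k$ with $\pSempty{n}{k}{T}(x)$, and handle the dual via $\beta_{\Ptop^\ast}(S)=\beta_{\Ptop}(n+1-S)$. You also implicitly read the second identity as concerning the non-augmented $\Chow_{\Ptop^\ast}$, which matches the paper's own proof and its later use of the lemma (the ``$\Chowaug_{\Ptop^\ast}$'' in the second displayed line is evidently a typo, since it would otherwise contradict the third line).
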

\begin{proof}
    We use \Cref{eq:flaghsimplicial} to rewrite the flag $h$-vectors $\beta_\Ptop(S)$ in terms of the $h$-vector of~$P$.
    The $\gamma$-polynomial of the Chow polynomial then becomes
    \begin{align*}
        \gamma(\Chow_{\Ptop} ; x ) 
        =& \sum_{\substack{ S \subseteq \{2,\dots , n\} \\ S \text{ isolated} }} \beta_\Ptop (S) \cdot x^{|S|} \\
        =& \sum_{k = 0}^{n} 
        h_k
        \sum_{\substack{ S \subseteq \{2,\dots , n\} \\ S \text{ isolated} }}
        \# \set{w\in \Sym_{n+1}}{w(1) = k+1\,, \ \Des(w) = n+1-S } \cdot x^{|S|} \\
        =& \sum_{k = 0}^{n} h_{k}
        \sum_{\substack{S\subseteq \{1,\dots , n-1\} \\ S \text{ isolated} }} 
        \# \set{w \in \Sym_{n+1}}{w(1) = k+1 \,, \ \Des(w) = S} 
        \cdot x^{|S|} \\
        =& \sum_{k = 0}^{n} h_{k}
        \sum_{\substack{w \in \widehat{\Sym}_{n+1,k+1} \\ n \notin \Des(w) }} 
         x^{\des(w) } \\
        =& \sum_{k = 0}^{n} h_{k} \cdot \pSempty{n}{k}{[1,n-1]} (x) \,.
    \end{align*}
    The calculation for the augmented Chow polynomial is completely analogous.
    % We have 
    % \begin{align*}
    %     \gamma(\Chowaug_\Ptop ) (x)
    %     =& \sum_{\substack{ S \subseteq \{1,\dots , n\} \\ S \text{ isolated} }} \beta_\Ptop (S) \cdot x^{|S|} \\
    %     % =& \sum_{k = 0}^{n} h_k
    %     % \sum_{\substack{ S \subseteq \{1,\dots , n\} \\ S \text{ isolated} }}
    %     % \# \set{w \in \Sym_{n+1}}{w(1) = k+1\,, \ \Des(w) = n+1-S } 
    %     % \cdot x^{|S|} \\
    %     =& \sum_{k = 0}^{n} h_{k}
    %      \sum_{\substack{S\subseteq \{1,\dots , n\} \\ S \text{ isolated} }} 
    %     \# \set{w \in \Sym_{n+1}}{w(1) = k+1 \,, \ \Des(w) = S} 
    %     \cdot x^{|S|} \\
    %     =& \sum_{k = 0}^{n} h_{k}
    %     \sum_{\substack{w \in \widehat{\Sym}_{n+1,k+1} }} x^{\des (w)} \\
    %     =& \sum_{k = 0}^{n} h_{k} \cdot \pSempty{n}{k}{[1,n]}(x) \,.
    % \end{align*}
    %
    The equalities for the dual poset $\Ptop^\ast$ now follow immediately with $\beta_{\Ptop^\ast}(n+1-S) = \beta_\Ptop(S)$.
\end{proof}

\begin{example}
    Let $\Ptop$ be the lattice of flats of $U_{3,4}$ from our running example.
    The $h$-vector of $P$ is $(h_0,h_1,h_2) = (1,2,3)$.
    The polynomials $\pSempty{2}{k}{T}$ are
    \begin{align*}
        \pSempty{2}{0}{T}(x) 
        &= \pSempty{1}{0}{T-1}(x) + \pSempty{1}{1}{T-1}(x) 
        = 1 + x \cdot \mathbbm{1}_{\{2\in T\}} \,, \\
        \pSempty{2}{1}{T}(x) 
        &= \mathbbm{1}_{ \{1\in T\} } \cdot x\cdot \pSempty{1}{0}{(T\setminus\{2\})-1}(x) 
        + \pSempty{1}{1}{T-1}(x) 
        = \mathbbm{1}_{ \{ 1\in T \} } \cdot x + \mathbbm{1}_{ \{ 2\in T \} } \cdot x \,,\\
        \pSempty{2}{2}{T}(x) 
        &= \mathbbm{1}_{ \{ 1\in T \} } \cdot x \cdot \left( \pSempty{1}{0}{(T\setminus\{2\})-1}(x) + x\cdot \pSempty{1}{1}{(T\setminus\{2\})-1}(x) \right)
        = \mathbbm{1}_{ \{ 1\in T \} } \cdot x \,.
    \end{align*}
    For the supsets $T = \{1\}$ and $T=\{1,2\}$, this is
    \begin{center}
        \begin{tabular}{l|lll}
        $T$         & $\pSempty{2}{0}{T}$   & $\pSempty{2}{1}{T}$   & $\pSempty{2}{2}{T}$ \\ \hline
        $\{1\}$     & $1$                   & $x$                   & $x$ \\ \hline
        $\{2\}$     & $1+x$                 & $x$                   & $0$ \\ \hline
        $\{1,2\}$   & $1+x$                 & $2x$                  & $x$
        \end{tabular}
    \end{center}
    Thus \Cref{lma: gamma Chow via abc} yields
    \begin{align*}
        \gamma(\Chow_\Ptop ; x)     
            &= 1\cdot \hspace*{15.2pt} 1 \hspace*{15.2pt} + 2\cdot \phantom{2}x + 3 \cdot x 
            = 1 + 5x \,, \\
        \gamma (\Chowaug_{\Ptop^\ast} ; x )      
            &= 1\cdot (1+x) + 2\cdot \phantom{2}x + 3 \cdot 0 \hspace*{0.7pt}
            = 1 + 3x \,, \\
        \gamma (\Chowaug_{\Ptop} ; x )   
            &= 1\cdot (1+x) + 2\cdot 2x + 3 \cdot x 
            = 1 + 8x \,.
    \end{align*}
\end{example}

The following theorem is the hard of the argument.

\begin{theorem}\label{thm: interlacing diag}
    Let $n\geq 2$ and let $T = [1,n]$ or $T=[1,n-1]$.
    The following diagram is an interlacing diagram,
    that is, 
    any path $(\cdot \rightarrow \cdot \rightarrow \cdots \rightarrow \cdot \rightarrow \cdot )$ forms an interlacing sequence.
    \begin{center}
        \begin{tikzpicture}[node distance=0.4cm and 0.6cm]
            % Top row nodes
            \node (a0) at (0,0) {$\pSempty{n}{0}{T\setminus \{1\}}$};
            \node (a1)  [right=of a0] {$\pSempty{n}{1}{T\setminus \{1\}}$};
            \node (dots1)   [right=of a1]   {$\cdots$};
            \node (ak)   [right=of dots1]   {$\pSempty{n}{k}{T\setminus \{1\}}$};
            \node (dots2)  [right=of ak]   {$\cdots$};
            \node (an1)  [right=of dots2] {$\pSempty{n}{n-1}{T\setminus \{1\}}$};
            \node (an)  [right=of an1] {$\pSempty{n}{n\phantom{-1}}{T\setminus \{1\}}$};
            
            % Mid row nodes
            \node (c0) [below=of a0] {$\pSempty{n}{0}{T}$};
            \node (c1) [below=of a1] {$\pSempty{n}{1}{T}$};
            \node (dots3)   [right=of c1]   {$\cdots$};
            \node (ck)   [below=of ak]   {$\pSempty{n}{k}{T}$};
            \node (dots4)  [right=of ck]   {$\cdots$};
            \node (cn1)  [below=of an1] {$\pSempty{n}{n-1}{T}$};
            \node (cn)  [below=of an] {$\pSempty{n}{n\phantom{-1}}{T}$};
            
            % Bottom row nodes
            \node (b0) [below=of c0] {$\p{n}{0}{\{1\}}{T}$};
            \node (b1) [below=of c1] {$\p{n}{1}{\{1\}}{T}$};
            \node (dots5)   [right=of b1]   {$\cdots$};
            \node (bk)   [below=of ck]   {$\p{n}{k}{\{1\}}{T}$};
            \node (dots6)  [right=of bk]   {$\cdots$};
            \node (bn1)  [below=of cn1] {$\p{n}{n-1}{\{1\}}{T}$};
            \node (bn)  [below=of cn] {$\p{n}{n\phantom{-1}}{\{1\}}{T}$};
            
            % Arrows (rows)
            \draw[->] (a0) -- (a1);
            \draw[->] (a1) -- (dots1) -- (ak);
            \draw[->] (ak) -- (dots2) -- (an1);
            \draw[->] (an1) -- (an);
            
            \draw[->] (c0) -- (c1);
            \draw[->] (c1) -- (dots3) -- (ck);
            \draw[->] (ck) -- (dots4) -- (cn1);
            \draw[->] (cn1) -- (cn);
            
            \draw[->] (b0) -- (b1);
            \draw[->] (b1) -- (dots5) -- (bk);
            \draw[->] (bk) -- (dots6) -- (bn1);
            \draw[->] (bn1) -- (bn);
            
            % Vertical arrows
            \draw[->] (a0) -- (c0);
            \draw[->] (c0) -- (b0);
            \draw[->] (a1) -- (c1);
            \draw[->] (c1) -- (b1);
            \draw[->] (ak) -- (ck);
            \draw[->] (ck) -- (bk);
            \draw[->] (an1) -- (cn1);
            \draw[->] (cn1) -- (bn1);
            \draw[->] (an) --(cn);
            \draw[->] (cn) --(bn);
            
            %\draw[->,overlay] (an) to[out=0, in=180] (b0);
            % \draw[->,smooth, purple] plot coordinates {
            %   (an.south west)
            %   ($(an1)+(0,-0.6)$)
            %   ($(ak)+(0,-0.65)$)
            %   ($(c1)+(0,-0.5)$)
            %   (b0.north east)
            % };
            \draw[->,smooth, purple] plot coordinates {
              (an.south west)
              ($(an1)+(0.2,-0.6)$)
              ($(a1) + (0.1,-0.6)$)
              (b0.north east)
            };
            
        \end{tikzpicture}
    \end{center}
\end{theorem}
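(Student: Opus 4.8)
I would argue by induction on $n$, using the recursion for $\p{n}{k}{S}{T}$. The first move is to read the recursion off at the level of the three rows. Write $A^{(n)}_k = \pSempty{n}{k}{T\setminus\{1\}}$, $C^{(n)}_k = \pSempty{n}{k}{T}$, $B^{(n)}_k = \p{n}{k}{\{1\}}{T}$ for the top, middle, bottom row. For $T=[1,n]$ (and, in the same way, for $T=[1,n-1]$, where the reduction again lands on a set of the same shape at rank $n-1$) the recursion gives
\[
  A^{(n)}_k = \sum_{j\ge k} C^{(n-1)}_j,\qquad
  B^{(n)}_k = x\sum_{j<k} A^{(n-1)}_j,\qquad
  C^{(n)}_k = A^{(n)}_k + B^{(n)}_k,
\]
with $A^{(n)}_n = B^{(n)}_0 = 0$; the last identity is the $s=1$ instance of $\p{n}{k}{S\setminus\{s\}}{T} = \p{n}{k}{S}{T} + \p{n}{k}{S\setminus\{s\}}{T\setminus\{s\}}$ recorded above. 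Thus the top row at rank $n$ is an upper partial sum of the middle row at rank $n-1$, and the bottom row at rank $n$ is $x$ times a lower partial sum of the top row at rank $n-1$. The base case $n=2$ is a direct check from the values of $\pSempty{2}{k}{T}$ listed in the example.

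For the inductive step I would first isolate what the rank-$(n-1)$ diagram supplies: each row is an interlacing sequence, $A^{(n-1)}_i \interlace C^{(n-1)}_{i'}$ and $C^{(n-1)}_i \interlace B^{(n-1)}_{i'}$ for $i\le i'$, and — reading the long purple path — $A^{(n-1)}_i \interlace B^{(n-1)}_j$ for \emph{all} $i,j$; this last relation is the engine of the whole argument. Conversely, a short case analysis of directed paths shows that the rank-$n$ diagram is an interlacing diagram once we know: each of its three rows is an interlacing sequence; $A^{(n)}_i \interlace C^{(n)}_{i'}$ and $C^{(n)}_i \interlace B^{(n)}_{i'}$ for $i\le i'$; and $A^{(n)}_i \interlace B^{(n)}_j$ for all $i,j$. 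The top and bottom rows at rank $n$ are interlacing sequences directly from \Cref{lma: recycle interlacing seq}(3), resp.\ (2) plus the elementary fact that multiplying an interlacing sequence by $x$ preserves interlacing, after adjoining the boundary zero. For the long diagonal $A^{(n)}_i \interlace B^{(n)}_j$ the observation is that the "unzipped" purple path $(A^{(n)}_0,\dots,A^{(n)}_{n-1},B^{(n)}_1,\dots,B^{(n)}_n)$ consists of nonzero polynomials, so by \Cref{lma: interlacing seq} it is enough to check consecutive interlacings together with one endpoint interlacing: the junction relation is $A^{(n)}_{n-1}=C^{(n-1)}_{n-1}\interlace xA^{(n-1)}_0=B^{(n)}_1$, which follows from $A^{(n-1)}_0\interlace C^{(n-1)}_{n-1}$ via \Cref{lma: interlacing sums}(3), and the endpoint relation $A^{(n)}_0=\sum_j C^{(n-1)}_j \interlace x\sum_j A^{(n-1)}_j = B^{(n)}_n$ follows from $\sum_j A^{(n-1)}_j \interlace \sum_j B^{(n-1)}_j$ (a consequence of the purple path and repeated \Cref{lma: interlacing sums}(1)–(2)) together with $f\interlace xf$. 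The remaining vertical and short-diagonal relations at rank $n$ then follow formally from $C^{(n)}=A^{(n)}+B^{(n)}$ and \Cref{lma: interlacing sums}(1)–(2).

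The delicate point — and the one I expect to be the main obstacle — is the interlacingness of the middle row $(C^{(n)}_0,\dots,C^{(n)}_n)$. The naive split $C^{(n)}_i=A^{(n)}_i+B^{(n)}_i$ is useless: $B^{(n)}_i$ has strictly larger degree than $A^{(n)}_{i+1}$, so $B^{(n)}_i\interlace A^{(n)}_{i+1}$ fails. Instead I would apply \Cref{lma: interlacing seq} once more; here $C^{(n)}_0\interlace C^{(n)}_n$ is exactly the endpoint relation $A^{(n)}_0\interlace B^{(n)}_n$ just proved, and for $C^{(n)}_i\interlace C^{(n)}_{i+1}$ one writes $C^{(n)}_i=h+f$ and $C^{(n)}_{i+1}=h+g$ with the \emph{common part}
\[
  h = x\sum_{l<i}A^{(n-1)}_l + \sum_{j>i}C^{(n-1)}_j,\qquad f = C^{(n-1)}_i,\qquad g = xA^{(n-1)}_i .
\]
Using only \Cref{lma: interlacing sums}(1)–(3), the purple-path relation, and the facts $s\interlace A^{(n-1)}_i$ and $s\interlace B^{(n-1)}_i$ for the partial sum $s=\sum_{l<i}A^{(n-1)}_l$, one checks that $(f,h,g)$ is an interlacing sequence; then the elementary implication "$(f,h,g)$ interlacing $\Rightarrow f+h\interlace g+h$" (itself a two-line consequence of \Cref{lma: interlacing sums}(1)–(2)) finishes the step. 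Finding this common part $h$ — so that what is left over is precisely a length-three interlacing sequence — is the crux; once it is in place the rest is bookkeeping with the interlacing calculus of Section~2, and collecting (I)–(VI) at rank $n$ shows every path in the rank-$n$ diagram is an interlacing sequence.
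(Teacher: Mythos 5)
Your proposal is correct and follows essentially the same route as the paper: induction on $n$ over both shapes of $T$, expressing the rank-$n$ rows as ($x$ times) partial sums of the rank-$(n-1)$ rows, the same junction relation $\pSempty{n}{n-1}{T\setminus\{1\}}\interlace x\cdot \pSempty{n-1}{0}{(T-1)\setminus\{1\}}$, and for consecutive middle-row entries the same common-part decomposition (the paper keeps your $h$ split as the two middle terms of a four-term interlacing sequence and applies the window-sum part of \Cref{lma: recycle interlacing seq}, while you sum them and use the equivalent elementary implication). The only organizational difference is where \Cref{lma: interlacing seq} is invoked --- the paper first reduces to rows, same-index columns and a single junction relation, whereas you verify all pairwise path relations directly --- which does not change the substance.
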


\begin{remark}[Generalization of \Cref{thm: interlacing diag}]
    \Cref{thm: interlacing diag} indeed holds for the following general case. 
    However, we omit the proof as it is not needed for the purpose of this paper.
    Let $T\subseteq \{1,\dots , n\}$ be non-empty with minimal element $\min(T) = m$, and let $m \in S\subseteq T$ be isolated.
    Then, the following is an interlacing diagram:
    \begin{center}
        \begin{tikzpicture}[node distance=0.4cm and 0.6cm]
            % Top row nodes
            \node (a0) at (0,0) {$\p{n}{0}{S\setminus\{m\}}{T\setminus \{m\}}$};
            \node (a1)  [right=of a0] {$\p{n}{1}{S\setminus\{m\}}{T\setminus \{m\}}$};
            \node (dots1)   [right=of a1]   {$\cdots$};
            \node (an1)  [right=of dots1] {$\p{n}{n-1}{S\setminus\{m\}}{T\setminus \{m\}}$};
            \node (an)  [right=of an1] {$\p{n}{n\phantom{-1}}{S\setminus\{m\}}{T\setminus \{m\}}$};
            
            % Mid row nodes
            \node (c0) [below=of a0] {$\p{n}{0}{S\setminus\{m\}}{T}$};
            \node (c1) [below=of a1] {$\p{n}{1}{S\setminus\{m\}}{T}$};
            \node (dots3)   [right=of c1]   {$\cdots$};
            \node (cn1)  [below=of an1] {$\p{n}{n-1}{S\setminus\{m\}}{T}$};
            \node (cn)  [below=of an] {$\p{n}{n\phantom{-1}}{S\setminus\{m\}}{T}$};
            
            % Bottom row nodes
            \node (b0) [below=of c0] {$\p{n}{0}{S}{T}$};
            \node (b1) [below=of c1] {$\p{n}{1}{S}{T}$};
            \node (dots5)   [right=of b1]   {$\cdots$};
            \node (bn1)  [below=of cn1] {$\p{n}{n-1}{S}{T}$};
            \node (bn)  [below=of cn] {$\p{n}{n\phantom{-1}}{S}{T}$};
            
            % Arrows (rows)
            \draw[->] (a0) -- (a1);
            \draw[->] (a1) -- (dots1) -- (an1);
            \draw[->] (an1) -- (an);
            
            \draw[->] (c0) -- (c1);
            \draw[->] (c1) -- (dots3) -- (cn1);
            \draw[->] (cn1) -- (cn);
            
            \draw[->] (b0) -- (b1);
            \draw[->] (b1) -- (dots5) -- (bn1);
            \draw[->] (bn1) -- (bn);
            
            % Vertical arrows
            \draw[->] (a0) -- (c0);
            \draw[->] (c0) -- (b0);
            \draw[->] (a1) -- (c1);
            \draw[->] (c1) -- (b1);
            \draw[->] (an1) -- (cn1);
            \draw[->] (cn1) -- (bn1);
            \draw[->] (an) --(cn);
            \draw[->] (cn) --(bn);
            
            \draw[->,smooth, purple] plot coordinates {
              (an.south west)
              ($(an1)+(1,-0.6)$)
              ($(a1) + (-1,-0.6)$)
              (b0.north east)
            };
            
        \end{tikzpicture}
    \end{center}
    This implies that
    $( \p{n}{0}{S}{T} \,, \p{n}{0}{S}{T} \,, \dots \,, \p{n}{n}{S}{T} )$ is an interlacing sequence for all $n\geq 1$ and all $S\subseteq T \subseteq\{1,\dots , n\}$.
    In particular, all polynomials $\p{n}{k}{S}{T}(x)$ are real-rooted.
\end{remark}

The proof of \Cref{thm: interlacing diag} is based on a successive application of properties of interlacing polynomials.
Before we dive into the details in \Cref{subsec: pf interlacing diag}, we deduce \Cref{thm: main thm chows are rr,thm: dual chow rr}, the main theorems of this paper.

\begin{proof}[Proof of \Cref{thm: main thm chows are rr,,thm: dual chow rr}]
    Let $P$ be a graded, simplicial poset of rank $n$ with positive $h$-vector $(h_0,\dots, h_n)$.
    For $n=1$, the Chow polynomials are given by
    \[
        \Chow_\Ptop(x) = \Chow_{\Ptop^\ast}(x) = 1+x \quad \text{and} \quad \Chowaug_\Ptop(x) = 1+(m+2)x +x^2 \,,
    \]
    where $m = \# \set{p\in P}{\rk(p) = 1}$. 
    Both polynomials are real-rooted.
    For $n\geq 2$ and $T\in \{[1,n-1], [2,n],[1,n]\}$, the sequence 
    \[
        \left( \pSempty{n}{0}{T}\,, \pSempty{n}{1}{T}\,, \ \dots \,, \pSempty{n}{n}{T} \right)
    \]
    is interlacing by \Cref{thm: interlacing diag}.
    By \Cref{lma: interlacing sums}, any nonnegative linear combination of these polynomials, 
    in particular
    \[
        \sum_{k=1}^{n} h_k \cdot \pSempty{n}{k}{T} (x) \quad \text{ is real-rooted.}
    \]
    As shown in \Cref{lma: gamma Chow via abc}, 
    this polynomial equals
    \begin{equation*}
        \sum_{k=1}^{n} h_k \cdot \pSempty{n}{k}{T} (x)
        = \begin{cases}
            \gamma(\Chow_{\Ptop} ; x ) & \text{if } T=[1,n-1]\,, \\
            \gamma( \Chow_{\Ptop^\ast} ; x ) & \text{if } T=[2,n]\,, \\
            \gamma(\Chowaug_{\Ptop} ; x ) & \text{if } T=[1,n]\,.
        \end{cases}
    \end{equation*}
    \Cref{lma: gamma exp rr} completes the proof for the real-rootedness.

    \smallskip
    Using the pink diagonal arrow in \Cref{thm: interlacing diag}, we also get the interlacing sequence 
    \[
        \left( h_0 \cdot \pSempty{n}{0}{[2,n]}\,, h_1 \cdot \pSempty{n}{1}{[2,n]}\,, \ \dots \,, h_n\cdot \pSempty{n}{n}{[2,n]} \,, 
        h_0 \cdot \p{n}{0}{\{1\}}{[1,n]}\,, h_1 \cdot \p{n}{1}{\{1\}}{[1,n]}\,, \ \dots \,, h_n \cdot \p{n}{n}{\{1\}}{[1,n]} \right) \,.
    \]
     The sum of the first half of this sequence interlaces the sum of the second half. 
     Moreover,
     \[
        \sum_{k=0}^n h_k \cdot \pSempty{n}{k}{[2,n]} \quadinterlace
        \sum_{k=0}^n h_k \cdot \bigl( \pSempty{n}{k}{[2,n]} + \p{n}{k}{\{1\}}{[2,n]} \bigr) 
        = \sum_{k=0}^n h_k \cdot \pSempty{n}{k}{[1,n]} \quadinterlace
        \sum_{k=0}^n h_k \cdot \p{n}{k}{\{1\}}{[2,n]} \,,
     \]
     that is, $\gamma( \Chow_{\Ptop^\ast} ; x )$ interlaces $\gamma(\Chowaug_{\Ptop} ; x )$.
    Using \Cref{prop: f interlaces iff gamma}, this concludes the proof.
\end{proof}

\begin{remark}[Eulerian polynomials]
    The Eulerian polynomial $A_n(x)$ and the binomial Eulerian polynomial $\widetilde{A}_n(x)$
    are defined by
    \[
        A_n(x) = \sum_{w\in \Sym_n} x^{\des(w)}
        \quad \text{and} \quad
        \widetilde{A}_n(x) = 1 + \sum_{i=0}^n \binom{n}{i} A_n(x) \,.
    \]
    The $\gamma$-expansions of $A_n(x)$ was first described in \cite{FoataSchutzenberger} as
    \[
        A_n(x) = \sum_{i \geq 0} \gamma_{n,i} \ x^i (1+x)^{n-1-2i} \,,
    \]
    where $\gamma_{n,i}$ is the number of permutations $w\in \Sym_{n}$, such that $\Des(w)\subset \{1,\dots , n-2\}$ is an isolated set of size $i$.
    A similar $\gamma$-expansions for $\widetilde{A}_n(x)$ was given in \cite{ShareshianWachs}:
    \[
        \widetilde{A}_n(x) = \sum_{k \geq 0} \tilde{\gamma}_{n,i} \ x^i (1+x)^{n-2i} \,,
    \]
    where $\tilde{\gamma}_{n,i}$ is the number of permutations $w\in \Sym_{n}$, such that $\Des(w)\subset \{1,\dots , n-1\}$ is isolated and has size $i$.
    
    \smallskip
    The polynomials $\pSempty{n}{k}{T}$ thus provide a natural decomposition of $A_{n+1}$ and $\widetilde{A}_{n+1}$ into palindromic polynomials, that is
    \begin{align*}
        A_{n+1}(x) =& \sum_{k=0}^{n} (1+x)^{n} \cdot \pSempty{n}{k}{[1,n-1]}\biggl( \frac{x}{(1+x)^2} \biggr)
        \,, \text{ and} \\
        \widetilde{A}_{n+1}(x) =& \sum_{k=0}^n (1+x)^{n+1} \cdot \pSempty{n}{k}{[1,n]}\biggl( \frac{x}{(1+x)^2} \biggr) \,.
    \end{align*}
    The real-rootedness of $A_{n+1}$ and $\widetilde{A}_{n+1}$ has been proven in various ways, see, for example, \cite{HAGLUND201938} for a uniform approach.
    \Cref{thm: interlacing diag} provides yet another proof for their real-rootedness.
    For the Boolean lattice $B_{n+1}$ of rank $n+1$m, we moreover recover
    \[
        \Chow_{B_{n+1}} (x) = A_{n+1}(x)
        \quad \text{and} \quad
        \Chowaug_{B_{n+1}}(x) = \widetilde{A}_{n+1}(x)
    \]
    since the $h$-vector of $B_{n+1} - \{\hat{1}\}$ is $(1,\dots , 1)$.
    The first identity was proven in \cite{HAMPE2017578}, the second in \cite{Eur_Huh_Larson_2023}.
\end{remark}

%%%%%%%%%%%%%%%%%%%%%%%%%%%%%%%%%%%%%%%%%%%%%%%%%%%%%%%%%%
\subsection{Proof of \Cref{thm: interlacing diag}}
\label{subsec: pf interlacing diag}

For $n\geq 2$ and $1\in T\subseteq \{1,\dots ,n \}$,
let $D_n(T)$ denote the diagram given in \Cref{thm: interlacing diag},

\begin{equation}
    \begin{tikzpicture}[node distance=0.4cm and 0.6cm, baseline=(current bounding box.center)]
        % Top row nodes
        \node (a0) at (0,0) {$\pSempty{n}{0}{T\setminus \{1\}}$};
        \node (a1)  [right=of a0] {$\pSempty{n}{1}{T\setminus \{1\}}$};
        \node (dots1)   [right=of a1]   {$\cdots$};
        \node (ak)   [right=of dots1]   {$\pSempty{n}{k}{T\setminus \{1\}}$};
        \node (dots2)  [right=of ak]   {$\cdots$};
        \node (an1)  [right=of dots2] {$\pSempty{n}{n-1}{T\setminus \{1\}}$};
        \node (an)  [right=of an1] {$\pSempty{n}{n}{T\setminus \{1\}}$};
        
        % Mid row nodes
        \node (c0) [below=of a0] {$\pSempty{n}{0}{T}$};
        \node (c1) [below=of a1] {$\pSempty{n}{1}{T}$};
        \node (dots3)   [right=of c1]   {$\cdots$};
        \node (ck)   [below=of ak]   {$\pSempty{n}{k}{T}$};
        \node (dots4)  [right=of ck]   {$\cdots$};
        \node (cn1)  [below=of an1] {$\pSempty{n}{n-1}{T}$};
        \node (cn)  [below=of an] {$\pSempty{n}{n}{T}$};
        
        % Bottom row nodes
        \node (b0) [below=of c0] {$\p{n}{0}{\{1\}}{T}$};
        \node (b1) [below=of c1] {$\p{n}{1}{\{1\}}{T}$};
        \node (dots5)   [right=of b1]   {$\cdots$};
        \node (bk)   [below=of ck]   {$\p{n}{k}{\{1\}}{T}$};
        \node (dots6)  [right=of bk]   {$\cdots$};
        \node (bn1)  [below=of cn1] {$\p{n}{n-1}{\{1\}}{T}$};
        \node (bn)  [below=of cn] {$\p{n}{n}{\{1\}}{T}$};
        
        % Arrows (rows)
        \draw[->] (a0) -- (a1);
        \draw[->] (a1) -- (dots1) -- (ak);
        \draw[->] (ak) -- (dots2) -- (an1);
        \draw[->] (an1) -- (an);
        
        \draw[->] (c0) -- (c1);
        \draw[->] (c1) -- (dots3) -- (ck);
        \draw[->] (ck) -- (dots4) -- (cn1);
        \draw[->] (cn1) -- (cn);
        
        \draw[->] (b0) -- (b1);
        \draw[->] (b1) -- (dots5) -- (bk);
        \draw[->] (bk) -- (dots6) -- (bn1);
        \draw[->] (bn1) -- (bn);
        
        % Vertical arrows
        \draw[->] (a0) -- (c0);
        \draw[->] (c0) -- (b0);
        \draw[->] (a1) -- (c1);
        \draw[->] (c1) -- (b1);
        \draw[->] (ak) -- (ck);
        \draw[->] (ck) -- (bk);
        \draw[->] (an1) -- (cn1);
        \draw[->] (cn1) -- (bn1);
        \draw[->] (an) --(cn);
        \draw[->] (cn) --(bn);
        
        \draw[->,smooth, purple] plot coordinates {
          (an.south west)
          ($(an1)+(0.2,-0.6)$)
          ($(a1) + (0.1,-0.6)$)
          (b0.north east)
        };
        
    \end{tikzpicture} \label{eq: interlacing diag}
\end{equation}

\Cref{thm: interlacing diag} claims that $D_n([1,n])$ as well as $D_n([1,n-1])$ are interlacing diagrams.
We proceed by induction on $n$. 
To clearly present the arguments, we split them into its two base cases (\Cref{lma: base case 1,lma: base case 2}), and five statements for the induction step (see \Cref{lma: induction step}).

\begin{lemma}[Base case for $T=\{1\}$]\label{lma: base case 1}
    The diagram $D_2(\{1\})$ is an interlacing diagram.
\end{lemma}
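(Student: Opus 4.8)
The plan is to settle this base case by direct computation, since for $n=2$ everything in sight is small. The diagram $D_2(\{1\})$ has nine entries: the top row $\pSempty{2}{k}{\emptyset}$ (as $T\setminus\{1\}=\emptyset$), the middle row $\pSempty{2}{k}{\{1\}}$, and the bottom row $\p{2}{k}{\{1\}}{\{1\}}$, for $k\in\{0,1,2\}$. First I would compute these, either from the recursion for $\p{n}{k}{S}{T}$ recorded above or, more transparently, by enumerating for each $k$ the permutations in $\widehat{\Sym}_{3,k+1}$, which are respectively $\{123,132\}$, $\{213,231\}$, and $\{312\}$ (the permutation $321$ being excluded since $\{1,2\}$ is not isolated).

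This yields $\pSempty{2}{0}{\emptyset}=1$ and $\pSempty{2}{1}{\emptyset}=\pSempty{2}{2}{\emptyset}=0$ in the top row; $\pSempty{2}{0}{\{1\}}=1$ and $\pSempty{2}{1}{\{1\}}=\pSempty{2}{2}{\{1\}}=x$ in the middle row; and $\p{2}{0}{\{1\}}{\{1\}}=0$ and $\p{2}{1}{\{1\}}{\{1\}}=\p{2}{2}{\{1\}}{\{1\}}=x$ in the bottom row --- the last row being obtainable alternatively as the difference of the first two via the identity $\p{n}{k}{\emptyset}{T}=\p{n}{k}{\{1\}}{T}+\p{n}{k}{\emptyset}{T\setminus\{1\}}$ noted above.

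The point is then that every entry of $D_2(\{1\})$ is one of $0$, $1$, $x$, and hence has degree at most $1$. By the conventions recalled in \Cref{subsec: rr} --- any two polynomials of degree $0$ or $1$ interlace, and $0\interlace f$ as well as $f\interlace 0$ for every real-rooted $f$ --- every ordered pair of entries of $D_2(\{1\})$ satisfies the interlacing relation. In particular, along any directed path $v_1\to\cdots\to v_m$ in the diagram the associated polynomials form an interlacing sequence, which is exactly the assertion. I do not anticipate any genuine obstacle here: the whole content is pinning down the base-case values, after which the degrees are simply too small for interlacing to be a constraint.
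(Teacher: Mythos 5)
Your computation of all nine entries of $D_2(\{1\})$ agrees with the paper's explicit diagram (top row $1,0,0$; middle row $1,x,x$; bottom row $0,x,x$), and your justification via the stated conventions that polynomials of degree at most $1$ and the zero polynomial always interlace is exactly the (implicit) content of the paper's one-line verification. This is the same direct-computation approach as the paper's proof, so the proposal is correct.
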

\begin{proof}
    The diagram $D_2(\{1\})$ is
    \begin{center}
    \begin{tikzpicture}[node distance=1.2cm and 2cm] % smaller horizontal spacing

      % LEFT SIDE: only the symbolic terms
      \begin{scope}[local bounding box=lhs]
        % Top row
        \node (a0) at (0,2.4) {$\pSempty{2}{0}{\emptyset}$};
        \node (a1) at (2.2,2.4) {$\pSempty{2}{1}{\emptyset}$};
        \node (a2) at (4.4,2.4) {$\pSempty{2}{2}{\emptyset}$};
        
        % Middle row
        \node (c0) at (0,1.2) {$\pSempty{2}{0}{\{1\}}$};
        \node (c1) at (2.2,1.2) {$\pSempty{2}{1}{\{1\}}$};
        \node (c2) at (4.4,1.2) {$\pSempty{2}{2}{\{1\}}$};
        
        % Bottom row
        \node (b0) at (0,0) {$\p{2}{0}{\{1\}}{\{1\}}$};
        \node (b1) at (2.2,0) {$\p{2}{1}{\{1\}}{\{1\}}$};
        \node (b2) at (4.4,0) {$\p{2}{2}{\{1\}}{\{1\}}$};
        
        % Arrows
        \draw[->] (a0) -- (a1);
        \draw[->] (a1) -- (a2);
        \draw[->] (c0) -- (c1);
        \draw[->] (c1) -- (c2);
        \draw[->] (b0) -- (b1);
        \draw[->] (b1) -- (b2);
    
        \draw[->] (a0) -- (c0);
        \draw[->] (a1) -- (c1);
        \draw[->] (a2) -- (c2);
        \draw[->] (c0) -- (b0);
        \draw[->] (c1) -- (b1);
        \draw[->] (c2) -- (b2);
    
        \draw[->,smooth, purple] plot coordinates {
          (a2.south west)
          ($(a2.south west)+(-0.4,-0.2)$)
          ($(a1)+(-0.5,-0.7)$)
          (b0.north east)
        };
      \end{scope}
    
      % Equals sign between diagrams
      \node at ($(lhs.east)+(1.0,0)$) {$=$};

      % RIGHT SIDE: only the evaluated values
      \begin{scope}[xshift=8cm] % adjust for new width
        % Top row
        \node (A0) at (0,2.4) {$1$};
        \node (A1) at (2,2.4) {$0$};
        \node (A2) at (4,2.4) {$0$};
        
        % Middle row
        \node (C0) at (0,1.2) {$1$};
        \node (C1) at (2,1.2) {$x$};
        \node (C2) at (4,1.2) {$x$};
        
        % Bottom row
        \node (B0) at (0,0) {$0$};
        \node (B1) at (2,0) {$x$};
        \node (B2) at (4,0) {$x$};
        
        % Arrows
        \draw[->] (A0) -- (A1);
        \draw[->] (A1) -- (A2);
        \draw[->] (C0) -- (C1);
        \draw[->] (C1) -- (C2);
        \draw[->] (B0) -- (B1);
        \draw[->] (B1) -- (B2);
    
        \draw[->] (A0) -- (C0);
        \draw[->] (A1) -- (C1);
        \draw[->] (A2) -- (C2);
        \draw[->] (C0) -- (B0);
        \draw[->] (C1) -- (B1);
        \draw[->] (C2) -- (B2);
    
        \draw[->,smooth, purple] plot coordinates {
          (A2.south west)
          ($(A2.south west)+(-0.4,-0.2)$)
          ($(A1)+(-0.5,-0.7)$)
          (B0.north east)
        };
      \end{scope}
    \end{tikzpicture}
    \end{center}
    and the roots of the polynomials interlace along every path.
\end{proof}

\begin{lemma}[Base case for $T=\{1,2\}$]\label{lma: base case 2}
    The diagram $D_2(\{1,2\})$ is an interlacing diagram.
\end{lemma}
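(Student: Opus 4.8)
The plan is to argue exactly as in the proof of \Cref{lma: base case 1}: spell out the diagram $D_2(\{1,2\})$ entry by entry, and then observe that the roots interlace along every path.

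First I would compute the nine polynomials. For $n=2$ the recursion for $\p{n}{k}{S}{T}$ specializes to the closed forms recorded in the example following \Cref{lma: gamma Chow via abc}, namely $\pSempty{2}{0}{T}(x) = 1 + x\cdot\mathbbm{1}_{\{2\in T\}}$, $\pSempty{2}{1}{T}(x) = x\cdot\mathbbm{1}_{\{1\in T\}} + x\cdot\mathbbm{1}_{\{2\in T\}}$, and $\pSempty{2}{2}{T}(x) = x\cdot\mathbbm{1}_{\{1\in T\}}$. Taking $T\setminus\{1\} = \{2\}$ gives the top row $\pSempty{2}{0}{\{2\}} = 1+x$, $\pSempty{2}{1}{\{2\}} = x$, $\pSempty{2}{2}{\{2\}} = 0$, and taking $T = \{1,2\}$ gives the middle row $\pSempty{2}{0}{\{1,2\}} = 1+x$, $\pSempty{2}{1}{\{1,2\}} = 2x$, $\pSempty{2}{2}{\{1,2\}} = x$. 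For the bottom row I would use the displayed identity $\pSempty{2}{k}{\{1,2\}} = \p{2}{k}{\{1\}}{\{1,2\}} + \pSempty{2}{k}{\{2\}}$ (the relation for removing $s=1$ from $S$), which yields $\p{2}{0}{\{1\}}{\{1,2\}} = 0$, $\p{2}{1}{\{1\}}{\{1,2\}} = x$, $\p{2}{2}{\{1\}}{\{1,2\}} = x$; this can also be checked directly from the recursion for $\p{n}{k}{S}{T}$.

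Every one of these nine polynomials has degree at most $1$. Hence, by the conventions recorded in \Cref{subsec: rr} — any two polynomials of degree $0$ or $1$ interlace, and the zero polynomial both interlaces and is interlaced by every real-rooted polynomial — every ordered pair of entries of $D_2(\{1,2\})$ satisfies the interlacing relation $\interlace$. In particular, for any directed path $(f_1 \to \dots \to f_m)$ through the diagram, including those using the curved arrow $\pSempty{2}{2}{\{2\}} = 0 \longrightarrow \p{2}{0}{\{1\}}{\{1,2\}} = 0$, we have $f_i \interlace f_j$ for all $i < j$, so $(f_1,\dots,f_m)$ is an interlacing sequence. This is precisely the claim that $D_2(\{1,2\})$ is an interlacing diagram. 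The only point requiring care is the index bookkeeping (the shifts $S-1$ and $T-1$) when verifying the bottom row from the recursion; once the table is correct, the conclusion is immediate from the degree bound, so I do not expect a genuine obstacle here.
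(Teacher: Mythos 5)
Your proposal is correct and follows essentially the same route as the paper: it computes the same $3\times 3$ table of entries for $D_2(\{1,2\})$ (top row $1+x,\,x,\,0$; middle row $1+x,\,2x,\,x$; bottom row $0,\,x,\,x$) and then observes that interlacing along every path is immediate. The only cosmetic difference is that you invoke the stated convention that polynomials of degree at most one (and the zero polynomial) always interlace, whereas the paper simply checks the roots along each path directly; both verifications are valid here.
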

\begin{proof}
    The diagram $D_2(\{1,2\})$ is
    \begin{center}
    \begin{tikzpicture}[node distance=1.2cm and 2cm] % smaller horizontal spacing

      % LEFT SIDE: only the symbolic terms
      \begin{scope}[local bounding box=lhs]
        % Top row
        \node (a0) at (0,2.4) {$\pSempty{2}{0}{\{2\}}$};
        \node (a1) at (2.2,2.4) {$\pSempty{2}{1}{\{2\}}$};
        \node (a2) at (4.4,2.4) {$\pSempty{2}{2}{\{2\}}$};
        
        % Middle row
        \node (c0) at (0,1.2) {$\pSempty{2}{0}{\{1,2\}}$};
        \node (c1) at (2.2,1.2) {$\pSempty{2}{1}{\{1,2\}}$};
        \node (c2) at (4.4,1.2) {$\pSempty{2}{2}{\{1,2\}}$};
        
        % Bottom row
        \node (b0) at (0,0) {$\p{2}{0}{\{1\}}{\{1,2\}}$};
        \node (b1) at (2.2,0) {$\p{2}{1}{\{1\}}{\{1,2\}}$};
        \node (b2) at (4.4,0) {$\p{2}{2}{\{1\}}{\{1,2\}}$};
        
        % Arrows
        \draw[->] (a0) -- (a1);
        \draw[->] (a1) -- (a2);
        \draw[->] (c0) -- (c1);
        \draw[->] (c1) -- (c2);
        \draw[->] (b0) -- (b1);
        \draw[->] (b1) -- (b2);
    
        \draw[->] (a0) -- (c0);
        \draw[->] (a1) -- (c1);
        \draw[->] (a2) -- (c2);
        \draw[->] (c0) -- (b0);
        \draw[->] (c1) -- (b1);
        \draw[->] (c2) -- (b2);
    
        \draw[->,smooth, purple] plot coordinates {
          (a2.south west)
          ($(a2.south west)+(-0.4,-0.2)$)
          ($(a1)+(-0.5,-0.7)$)
          (b0.north east)
        };
      \end{scope}
    
      % Equals sign between diagrams
      \node at ($(lhs.east)+(1.0,0)$) {$=$};

      % RIGHT SIDE: only the evaluated values
      \begin{scope}[xshift=8cm] % adjust for new width
        % Top row
        \node (A0) at (0,2.4) {$x+1$};
        \node (A1) at (2,2.4) {$x$};
        \node (A2) at (4,2.4) {$0$};
        
        % Middle row
        \node (C0) at (0,1.2) {$x+1$};
        \node (C1) at (2,1.2) {$2x$};
        \node (C2) at (4,1.2) {$x$};
        
        % Bottom row
        \node (B0) at (0,0) {$0$};
        \node (B1) at (2,0) {$x$};
        \node (B2) at (4,0) {$x$};
        
        % Arrows
        \draw[->] (A0) -- (A1);
        \draw[->] (A1) -- (A2);
        \draw[->] (C0) -- (C1);
        \draw[->] (C1) -- (C2);
        \draw[->] (B0) -- (B1);
        \draw[->] (B1) -- (B2);
    
        \draw[->] (A0) -- (C0);
        \draw[->] (A1) -- (C1);
        \draw[->] (A2) -- (C2);
        \draw[->] (C0) -- (B0);
        \draw[->] (C1) -- (B1);
        \draw[->] (C2) -- (B2);
    
        \draw[->,smooth, purple] plot coordinates {
          (A2.south west)
          ($(A2.south west)+(-0.4,-0.2)$)
          ($(A1)+(-0.5,-0.7)$)
          (B0.north east)
        };
      \end{scope}
    \end{tikzpicture}
    \end{center}
    and the roots of the polynomials interlace along every path.
\end{proof}

Now we have proven the base cases, we move to the induction step.
The following lemma shows us what we need to verify in the induction step.

\begin{lemma}\label{lma: induction step}
    Let $T=[1,n]$ or $T=[1,n-1]$ for $n\geq 3$. 
    The diagram~$D_{n}(T)$, as given as in \eqref{eq: interlacing diag}, is an interlacing diagram if and only if the following five statements hold:
    \begin{align}
        &\biggl( \pSempty{n}{0}{T\setminus \{1\}} \,, \pSempty{n}{1}{T\setminus \{1\}}\,, \dots \,, \pSempty{n}{n}{T\setminus \{1\}} \biggr)
            \text{ is an interlacing sequence,}\label{eq: ainter} \\[5pt]
        &\biggl(\pSempty{n}{0}{T} \,, \pSempty{n}{1}{T}\,, \dots \,, \pSempty{n+1}{n+1}{T} \biggr)
            \text{ is an interlacing sequence,} \label{eq: cinter} \\[5pt]
        &\biggl(\p{n+1}{0}{\{1\}}{T} \,, \p{n+1}{1}{\{1\}}{T}\,, \dots \,, \p{n+1}{n+1}{\{1\}}{T} \biggr)
            \text{ is an interlacing sequence,} \label{eq: binter} \\[5pt]
        &\biggl(\pSempty{n}{k}{T\setminus \{1\}} \,, \pSempty{n}{k}{T} \,, \p{n}{k}{{\{1\}}}{T} \biggr) 
            \text{ is an interlacing sequence for all } 0\leq k \leq n \,, 
            \text{ and} \label{eq: acbinter} \\[5pt]
        %&\pSempty{n}{0}{T\setminus \{1\}} \interlace \p{n}{n}{\{1\}}{T} \,, \text{ and} \label{eq: startendinter} \\
        &\pSempty{n}{n-1}{T\setminus \{1\}} \spaceinterlace \p{n}{0}{\{1\}}{T} \,. \label{eq: abinter}
    \end{align}
\end{lemma}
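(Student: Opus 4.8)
The implication ``only if'' is straightforward: each of \eqref{eq: ainter}--\eqref{eq: abinter} merely records that one explicit directed path of $D_n(T)$ is an interlacing sequence, namely the three horizontal rows for \eqref{eq: ainter}, \eqref{eq: cinter}, \eqref{eq: binter}, the $k$-th vertical column for \eqref{eq: acbinter}, and the two-edge path $\pSempty{n}{n-1}{T\setminus\{1\}}\to\pSempty{n}{n}{T\setminus\{1\}}\to\p{n}{0}{\{1\}}{T}$ (last edge of the top row, then the pink diagonal) for \eqref{eq: abinter}; so they all hold as soon as $D_n(T)$ is an interlacing diagram.

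For ``if'' I would assume the five statements and prove that every directed path $\pi=(f_1,\dots,f_m)$ of $D_n(T)$ is an interlacing sequence by verifying the two conditions of \Cref{lma: interlacing seq}: consecutive terms of $\pi$ interlace, and $f_1\interlace f_m$. The first is immediate because every edge of $D_n(T)$ is an interlacing relation --- horizontal edges by \eqref{eq: ainter}--\eqref{eq: binter}, vertical edges by \eqref{eq: acbinter}, and the pink diagonal because its source $\pSempty{n}{n}{T\setminus\{1\}}$ equals $0$ (a permutation $w$ with $w(1)=n+1$ has $1\in\Des(w)$, which is excluded since $1\notin T\setminus\{1\}$) and $0\interlace g$ for every real-rooted $g$. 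For $f_1\interlace f_m$ I would run a case analysis on the rows that $\pi$ visits. If $\pi$ stays in a single row it is a subsequence of one of the interlacing sequences \eqref{eq: ainter}, \eqref{eq: cinter}, \eqref{eq: binter}, and we are done; the substantial case is when $\pi$ descends from the top row to the bottom row, either through the middle row via vertical edges or directly across the pink diagonal. Then $f_1=\pSempty{n}{i}{T\setminus\{1\}}$ (or a middle-row entry) and $f_m=\p{n}{j}{\{1\}}{T}$ (or a middle-row entry) for suitable $i,j$, and the plan is to produce the required interlacing using three ingredients: the displayed identity $\pSempty{n}{k}{T}=\pSempty{n}{k}{T\setminus\{1\}}+\p{n}{k}{\{1\}}{T}$, which makes the middle row the termwise sum of the other two and hence (via \Cref{lma: interlacing sums}) a conduit for transferring interlacings between the three rows; the vanishings $\pSempty{n}{n}{T\setminus\{1\}}=0$ and $\p{n}{0}{\{1\}}{T}=0$ (the latter because $w(1)=1$ forces $1\notin\Des(w)$, against $\{1\}\subseteq\Des(w)$), which identify the first and last entries of the middle row with $\pSempty{n}{0}{T\setminus\{1\}}$ and $\p{n}{n}{\{1\}}{T}$ and thereby pin down the extreme terms of $\pi$'s ambient sequence via \eqref{eq: cinter}; and condition \eqref{eq: abinter}, which supplies the single interlacing needed to step across the pink diagonal.

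The step I expect to be the genuine obstacle is precisely this last one --- obtaining the cross-row interlacings $\pSempty{n}{i}{T\setminus\{1\}}\interlace\p{n}{j}{\{1\}}{T}$ --- because $\interlace$ is not transitive, so they cannot simply be chained out of the column relations \eqref{eq: acbinter} and the row relations, and one must genuinely use that the middle row is at once an interlacing sequence (with its ends controlled by the two vanishing corner polynomials) and the sum of the top and bottom rows, so that \Cref{lma: interlacing sums} can be brought to bear. The remaining care, around the two identically-zero corner entries --- which, for $n\ge3$ and the admissible $T\in\{[1,n],[1,n-1]\}$, are the only vanishing terms of the diagram --- is routine bookkeeping via the stated conventions on degenerate interlacings.
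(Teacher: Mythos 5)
Your setup is the right one --- the ``only if'' direction by reading off paths, and for ``if'' the two-condition criterion of \Cref{lma: interlacing seq} (consecutive interlacings plus $f_1\interlace f_m$), with the edge interlacings supplied by the five conditions and the two vanishing corners handled by the degenerate conventions. That much coincides with the paper. But the crux of the lemma is exactly the step you leave open: proving $f_1\interlace f_m$ for a path that starts in the top row and ends in the bottom row. You name this the ``genuine obstacle'' and only list ingredients, and the route you gesture at --- deriving the pairwise cross-row interlacings $\pSempty{n}{i}{T\setminus\{1\}}\interlace\p{n}{j}{\{1\}}{T}$ by using \Cref{lma: interlacing sums} and the identity $\pSempty{n}{k}{T}=\pSempty{n}{k}{T\setminus\{1\}}+\p{n}{k}{\{1\}}{T}$ as a ``conduit'' --- does not work as stated: \Cref{lma: interlacing sums} only passes interlacing \emph{to} sums ($f,g\interlace h\Rightarrow f+g\interlace h$, etc.), and there is no converse that lets you split ``top $\interlace$ middle $=$ top $+$ bottom'' into ``top $\interlace$ bottom''. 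So as written the proof is incomplete at its central point.

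The missing idea is that these cross-row interlacings should not be proved as inputs at all; they fall out of \Cref{lma: interlacing seq} once the path is normalized. After discarding the two zero entries $\pSempty{n}{n}{T\setminus\{1\}}$ and $\p{n}{0}{\{1\}}{T}$ (so that the hypothesis $f_i\not\equiv 0$ of \Cref{lma: interlacing seq} holds), extend any path, without loss of generality, so that it begins at the top-left entry and ends at the bottom-right entry of the diagram; a path is a subsequence of its extension, and subsequences of interlacing sequences are interlacing. For the extended path the extreme interlacing is free: the corner vanishings give $f_1=\pSempty{n}{0}{T\setminus\{1\}}=\pSempty{n}{0}{T}$ and $f_m=\p{n}{n}{\{1\}}{T}=\pSempty{n}{n}{T}$, so $f_1\interlace f_m$ is literally the first-to-last interlacing of the middle row, i.e.\ condition \eqref{eq: cinter}. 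Consecutive terms interlace by the row conditions, the column condition \eqref{eq: acbinter}, and --- for the one step across the pink diagonal in the zero-free diagram, from $\pSempty{n}{n-1}{T\setminus\{1\}}$ to $\p{n}{1}{\{1\}}{T}$ --- by \eqref{eq: abinter} (note the published formula there has an index typo; the nontrivial statement, and the one the later lemma proves, is $\pSempty{n}{n-1}{T\setminus\{1\}}\spaceinterlace\p{n}{1}{\{1\}}{T}$, since $\p{n}{0}{\{1\}}{T}\equiv 0$ makes the printed version vacuous). Then \Cref{lma: interlacing seq} makes the whole extended path an interlacing sequence, every original path inherits this, and reinserting the two zero entries is harmless by the conventions $0\interlace f$ and $f\interlace 0$. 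No appeal to \Cref{lma: interlacing sums} is needed anywhere in this lemma; it is only used later when verifying the five conditions themselves.
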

\begin{proof}
    All five statements follow immediately when $D_n(T)$ is an interlacing diagram. 
    Thus, suppose that \eqref{eq: ainter}--\eqref{eq: abinter} hold.
    The only polynomials that are zero in the diagram $D_{n}(T)$ are  
    \[
        \pSempty{n}{n}{T\setminus \{1\} }(x) \equiv 0
        \quad \text{and} \quad
        \p{n}{0}{\{1\} }{T} (x) \equiv 0 \,.
    \]
    Any other polynomial in $D_{n}(T)$ is a non-empty sum of polynomials with nonnegative coefficients of which at least one is not zero.

    \smallskip
    Consider any path of polynomials $(f_1 \rightarrow f_2 \rightarrow \cdots \rightarrow f_m)$ in $D_n(T)$ that contains neither~$\pSempty{n}{n}{T\setminus \{1\}}$ nor~$\p{n}{0}{\{1\} }{T}$.
    Without loss of generality, the first polynomial in this path is the top left one,~$f_1=\pSempty{n}{0}{T\setminus \{1\} }$ and the last polynomial is the bottom right one,~$f_m = \p{n}{0}{\{1\} }{T}$.
    Then, any two adjacent polynomials $f_i \rightarrow f_{i+1}$ interlace as $f_i \interlace f_{i+1}$, since they are either 
    \begin{itemize}
        \item in the same row of the diagram $D_{n}(T)$ (interlace by \eqref{eq: ainter}, \eqref{eq: cinter} or \eqref{eq: binter}), or
        \item in the same column of the diagram $D_n(T)$ (interlace by \eqref{eq: acbinter}), or
        \item $f_i = \pSempty{n}{n-1}{T\setminus \{1\}}$ and $f_{i+1} = \p{n}{0}{\{1\}}{T}$ (interlace by \eqref{eq: abinter}).
    \end{itemize}
    Moreover, the first and last polynomial in the path satisfy the identities
    \begin{align*}
        f_1 &= \pSempty{n}{0}{T\setminus \{1\} } 
            = \pSempty{n}{0}{T} - \p{n}{0}{\{1\}}{T\setminus \{1\} } 
            = \pSempty{n}{0}{T} \quad \text{ and}\\
        f_m &= \p{n}{n}{\{1\}}{T} 
            = \pSempty{n}{n}{T} - \pSempty{n}{n}{T\setminus \{1\} } 
            = \pSempty{n}{n}{T} \,.
    \end{align*}
    Thus, $f_1 \interlace f_m$ since the middle row of $D_n(T)$ is interlacing by \eqref{eq: cinter}. Since all polynomials in the path are nonzero, they form an interlacing sequence~$(f_1 \interlace f_2 \interlace \cdots \interlace f_m)$ by \Cref{lma: interlacing seq}.
    This proves that the diagram $D_n(T)$ with zeros omitted,
    \begin{center}
    \begin{tikzpicture}[node distance=0.4cm and 0.6cm]
        % Top row nodes
        \node (a0) at (0,0) {$\pSempty{n}{0}{T\setminus \{1\}}$};
        \node (a1)  [right=of a0] {$\pSempty{n}{1}{T\setminus \{1\}}$};
        \node (dots1)   [right=of a1]   {$\cdots$};
        \node (an1)  [right=of dots1] {$\pSempty{n}{n-1}{T\setminus \{1\}}$};
        \node (an)  [right=of an1] {$\phantom{\pSempty{n}{n}{T\setminus \{1\}}}$};
        
        % Mid row nodes
        \node (c0) [below=of a0] {$\pSempty{n}{0}{T}$};
        \node (c1) [below=of a1] {$\pSempty{n}{1}{T}$};
        \node (dots3)   [right=of c1]   {$\cdots$};
        \node (cn1)  [below=of an1] {$\pSempty{n}{n-1}{T}$};
        \node (cn)  [below=of an] {$\pSempty{n}{n}{T}$};
        
        % Bottom row nodes
        \node (b0) [below=of c0] {$\phantom{\p{n}{0}{\{1\}}{T}}$};
        \node (b1) [below=of c1] {$\p{n}{1}{\{1\}}{T}$};
        \node (dots5)   [right=of b1]   {$\cdots$};
        \node (bn1)  [below=of cn1] {$\p{n}{n-1}{\{1\}}{T}$};
        \node (bn)  [below=of cn] {$\p{n}{n}{\{1\}}{T}$};
        
        % Arrows (rows)
        \draw[->] (a0) -- (a1);
        \draw[->] (a1) -- (dots1) -- (an1);
        %\draw[->] (an1) -- (an);
        
        \draw[->] (c0) -- (c1);
        \draw[->] (c1) -- (dots3) -- (cn1);
        \draw[->] (cn1) -- (cn);
        
        %\draw[->] (b0) -- (b1);
        \draw[->] (b1) -- (dots5) -- (bn1);
        \draw[->] (bn1) -- (bn);
        
        % Vertical arrows
        \draw[->] (a0) -- (c0);
        %\draw[->] (c0) -- (b0);
        \draw[->] (a1) -- (c1);
        \draw[->] (c1) -- (b1);
        \draw[->] (an1) -- (cn1);
        \draw[->] (cn1) -- (bn1);
        %\draw[->] (an) --(cn);
        \draw[->] (cn) --(bn);
        
        %\draw[->,overlay] (an) to[out=0, in=180] (b0);
        % \draw[->,smooth, purple] plot coordinates {
        %   (an.south west)
        %   ($(an1)+(0,-0.6)$)
        %   ($(ak)+(0,-0.65)$)
        %   ($(c1)+(0,-0.5)$)
        %   (b0.north east)
        % };
        \draw[->,smooth, purple] plot coordinates {
          (an1.south west)
          (b1.north east)
        };
    \end{tikzpicture}
\end{center}
    is an interlacing diagram. 
    Adding "0" in the bottom left corner and the top right corner to this diagram does not affect the interlacing properties.
\end{proof}

Assume from now that $D_{n-1}([1,n-2])$ and $D_{n-1}([1,n-1])$ are both interlacing diagrams for fixed $n-1\geq 3$.
The induction step of our induction within the next lemmas shows, that both $D_n([1,n-1])$ and $D_n([1,n])$ are interlacing diagrams.
Recall the recursive formulas for the polynomials in the diagram~$D_n(T)$:
\begin{align*}
        \pSempty{n}{k}{T\setminus\{1\}}(x)
            &= \phantom{ x \cdot \sum_{j = 0}^{k-1} \pSempty{n-1}{j}{(T-1)\setminus\{1\}} (x)
                + } \sum_{j=k}^{n-1} \pSempty{n-1}{j}{T -1} (x) \,, \\
        \pSempty{n}{k}{T } (x)
            &= x \cdot \sum_{j = 0}^{k-1} \pSempty{n-1}{j}{(T-1)\setminus\{1\}} (x) 
                + \sum_{j=k}^{n-1} \pSempty{n-1}{j}{T-1} (x) 
                \quad = \ \pSempty{n}{k}{T\setminus\{1\}}(x) + \p{n}{k}{\{1\}}{T} (x) \,, \\
        \p{n}{k}{\{1\}}{T} (x) 
            &= x \cdot \sum_{j = 0}^{k-1} \pSempty{n-1}{j}{(T-1)\setminus\{1\}} (x) \,.
\end{align*}

The following proofs will therefore work with $D_{n-1}(T-1)$ which is an interlacing diagram by induction hypothesis and contains all polynomials showing up in the recursive formulas.

\begin{lemma}[Top row, \eqref{eq: ainter}]
    Let either $T = [2,n-1]$ or $T=[2,n-2]$.
    Then
    \[
      (\pSempty{n}{0}{T} \,, \pSempty{n}{1}{T}\,, \dots \,, \pSempty{n}{n}{T}) \text{ is interlacing}\,.
    \]
\end{lemma}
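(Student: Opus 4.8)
The plan is to read each $\pSempty{n}{k}{T}$ off as an upper partial sum of the top row of a smaller diagram, and then to invoke the stability of interlacing sequences under partial sums, \Cref{lma: recycle interlacing seq}(3).

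First I would observe that $1\notin T$ in all cases under consideration, so the indicator $\mathbbm{1}_{\{1\in T\}}$ drops out of the recursion and
\[
    \pSempty{n}{k}{T}(x)\;=\;\sum_{j=k}^{n-1}\pSempty{n-1}{j}{T-1}(x)\,,\qquad 0\le k\le n\,,
\]
where the $k=n$ term is an empty sum, so $\pSempty{n}{n}{T}\equiv 0$ (directly: if $w(1)=n+1$ then $1\in\Des(w)$, which is impossible once $1\notin T$). The sequence $\bigl(\pSempty{n-1}{0}{T-1},\dots,\pSempty{n-1}{n-1}{T-1}\bigr)$ appearing on the right is precisely the top row of the diagram $D_{n-1}\bigl((T-1)\cup\{1\}\bigr)$, and $(T-1)\cup\{1\}$ is one of the two intervals (namely $[1,n-2]$ or $[1,n-1]$, according as $T=[2,n-1]$ or $T=[2,n]$) for which $D_{n-1}$ is an interlacing diagram by the induction hypothesis; hence that row is an interlacing sequence.

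Now the polynomials $\pSempty{n}{0}{T},\dots,\pSempty{n}{n-1}{T}$ are exactly the upper partial sums of that interlacing sequence, so \Cref{lma: recycle interlacing seq}(3) shows they again form an interlacing sequence; appending the zero polynomial $\pSempty{n}{n}{T}$ at the end preserves this, since $f\interlace 0$ and $0\interlace 0$ by convention. Thus $\bigl(\pSempty{n}{0}{T},\dots,\pSempty{n}{n}{T}\bigr)$ is interlacing, as claimed. No step here is genuinely hard; the only thing requiring care is the index bookkeeping — confirming that $1\notin T$ so the recursion collapses, that $(T-1)\cup\{1\}$ really is one of the intervals supplied by the induction hypothesis, and that the $k=n$ entry vanishes — after which the statement is a single application of \Cref{lma: recycle interlacing seq}(3).
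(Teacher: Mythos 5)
Your strategy is the paper's own: use the recursion (with $1\notin T$, so the first summand vanishes) to write $\pSempty{n}{k}{T}=\sum_{j\ge k}\pSempty{n-1}{j}{T-1}$, recognize these as the upper partial sums of a row of a diagram covered by the induction hypothesis, and apply \Cref{lma: recycle interlacing seq}(3); you even treat the terminal zero $\pSempty{n}{n}{T}\equiv 0$ explicitly, which the paper leaves implicit at this point, and you correctly read the admissible supersets as $[2,n-1]$ and $[2,n]$, i.e.\ the sets appearing in the top rows of $D_n([1,n-1])$ and $D_n([1,n])$.

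The one slip is the identification of the row being summed, and since this lemma is nothing but bookkeeping, that is exactly where precision matters. In the recursion the shifted set must keep the element $1$: a descent of $w$ at position $2\in T$ becomes a descent of the truncated permutation at position $1$, so for $T=[2,n]$ (resp.\ $T=[2,n-1]$) one has $T-1=[1,n-1]$ (resp.\ $[1,n-2]$); the printed definition of $T-1$, which deletes the element $1$ after shifting, is a typo for deleting $0$ (this is also why the recursion writes $(T-1)\setminus\{1\}$ explicitly in its first summand). Consequently $\bigl(\pSempty{n-1}{0}{T-1},\dots,\pSempty{n-1}{n-1}{T-1}\bigr)$ is the \emph{middle} row of $D_{n-1}(T-1)$ --- which is what the paper sums --- and not the top row of $D_{n-1}\bigl((T-1)\cup\{1\}\bigr)$: that top row consists of $\pSempty{n-1}{j}{[2,n-1]}$ (resp.\ $\pSempty{n-1}{j}{[2,n-2]}$), and its upper partial sums are $\pSempty{n}{k}{[3,n]}$ rather than $\pSempty{n}{k}{[2,n]}$. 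Concretely, $\pSempty{3}{0}{\{2,3\}}=1+4x=\pSempty{2}{0}{\{1,2\}}+\pSempty{2}{1}{\{1,2\}}+\pSempty{2}{2}{\{1,2\}}$, whereas summing the top row $\pSempty{2}{j}{\{2\}}$ gives only $1+2x$. Since the middle row of $D_{n-1}(T-1)$ is also an interlacing sequence under the induction hypothesis, the conclusion is unharmed: replacing ``top row of $D_{n-1}((T-1)\cup\{1\})$'' by ``middle row of $D_{n-1}(T-1)$'' turns your argument into exactly the paper's proof.
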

\begin{proof}
    Consider the interlacing sequence
    \(
    \left( \pSempty{n-1}{0}{T-1}\interlace \cdots \interlace \pSempty{n-1}{n-1}{T-1} \right) \,,
    \)
    the middle row of the diagram~$D_{n-1}(T-1)$.
    By \Cref{lma: recycle interlacing seq}, the upper partial sums 
    \[
      \pSempty{n}{k}{T} = \sum_{j=k}^{n-1} \pSempty{n-1}{k}{T-1}
    \]
    form an interlacing sequence.
\end{proof}

\begin{lemma}[Middle row, \eqref{eq: cinter}]
    Let either $T = [1,n-2]$ or $T=[1,n-1]$. 
    Then
    \[
      (\pSempty{n}{0}{T} \,, \pSempty{n}{1}{T}\,, \dots \,, \pSempty{n}{n}{T}) \text{ is interlacing}\,.
    \]
\end{lemma}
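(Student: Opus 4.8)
The plan is to descend to the recursion and work entirely inside the diagram $D_{n-1}(T-1)$, which is an interlacing diagram by the induction hypothesis. Since $1\in T$, the recursive formula recalled just above reads
\[
    \pSempty{n}{k}{T}(x)\;=\;x\sum_{j=0}^{k-1}g_j(x)\;+\;\sum_{j=k}^{n-1}f_j(x),
\]
where I abbreviate $g_j:=\pSempty{n-1}{j}{(T-1)\setminus\{1\}}$ and $f_j:=\pSempty{n-1}{j}{T-1}$, so that $(g_0,\dots,g_{n-1})$ is the top row and $(f_0,\dots,f_{n-1})$ the middle row of $D_{n-1}(T-1)$. In particular $(g_0,\dots,g_{n-1})$ and $(f_0,\dots,f_{n-1})$ are interlacing sequences, $g_j\interlace f_j$ for every $j$, every directed path in $D_{n-1}(T-1)$ is an interlacing sequence, and the two extremal polynomials are $\pSempty{n}{0}{T}=\sum_j f_j$ and $\pSempty{n}{n}{T}=x\sum_j g_j$.

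The first step I would carry out is a ``staircase'' observation: for each $0\le k\le n$ the length-$n$ sequence $\mathcal Q^{(k)}:=(g_0,\dots,g_{k-1},f_k,\dots,f_{n-1})$ is an interlacing sequence. Indeed, all of its consecutive interlacings are inherited from the top or the middle row of $D_{n-1}(T-1)$ except for $g_{k-1}\interlace f_k$, and this one holds because $(g_{k-1},g_k,f_k)$ is a path, hence an interlacing sequence, in $D_{n-1}(T-1)$, so $g_{k-1}\interlace f_k$ by \Cref{lma: interlacing seq}; the same lemma together with $g_0\interlace f_{n-1}$ (read off any path that runs along the top row and then descends into the middle row) gives the claim. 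Under this reformulation, $\pSempty{n}{k}{T}$ is exactly ``$x$ times the first $k$ entries of $\mathcal Q^{(k)}$, plus the remaining $n-k$ entries''.

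By \Cref{lma: interlacing seq} it then suffices to establish $\pSempty{n}{k}{T}\interlace\pSempty{n}{k+1}{T}$ for $0\le k<n$ together with $\pSempty{n}{0}{T}\interlace\pSempty{n}{n}{T}$. For the global relation I would apply \Cref{lma: recycle interlacing seq}(2) to the path of $D_{n-1}(T-1)$ that traverses the whole top row and then, through the pink arrow, the whole bottom row: among its lower partial sums one finds $\sum_j g_j$ and, using the identity $\pSempty{n-1}{j}{T-1}=\pSempty{n-1}{j}{(T-1)\setminus\{1\}}+\p{n-1}{j}{\{1\}}{T-1}$, also $\sum_j f_j$, so $\sum_j g_j\interlace\sum_j f_j$ and hence $\pSempty{n}{0}{T}=\sum_j f_j\interlace x\sum_j g_j=\pSempty{n}{n}{T}$ by \Cref{lma: interlacing sums}(3). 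For a consecutive pair, write $\pSempty{n}{k}{T}=Z+f_k$ and $\pSempty{n}{k+1}{T}=Z+x g_k$ with common part $Z=x\sum_{j<k}g_j+\sum_{j>k}f_j$; since $g_k\interlace f_k$ one has $f_k\interlace x g_k$ by \Cref{lma: interlacing sums}(3), and what is left is to see that this interlacing survives the addition of $Z$.

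I expect that last point to be the main obstacle. \Cref{lma: recycle interlacing seq}(5) is exactly the statement that the operation ``multiply a prefix by $x$ and keep the suffix'' sends an interlacing sequence to an interlacing sequence, but there a single sequence is used throughout, whereas here the prefix is drawn from the top row $(g_j)$ and the suffix from the middle row $(f_j)$; correspondingly $\pSempty{n}{k}{T}$ and $\pSempty{n}{k+1}{T}$ are not two consecutive terms of one such transform (one checks that $\pSempty{n}{k}{T}$ sits in the transform of $\mathcal Q^{(k)}$ while $\pSempty{n}{k+1}{T}$ sits in the transform of $\mathcal Q^{(k+1)}$, and these two staircases differ only by replacing the single entry $f_k$ with $g_k$). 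Bridging the two transforms is therefore the crux: the remedy I would pursue is to combine the interlacing sequences obtained from \Cref{lma: recycle interlacing seq}(5) applied to $\mathcal Q^{(k)}$ and to $\mathcal Q^{(k+1)}$ with the column relation $g_k\interlace f_k$ (equivalently $f_k\interlace x g_k$) and the sum rules of \Cref{lma: interlacing sums}, so as to place $\pSempty{n}{k}{T}$, $\pSempty{n}{k+1}{T}$ and the auxiliary polynomials $Z+g_k$, $Z+x f_k$ that appear inside one common interlacing sequence; the bookkeeping of which partial sum of which path supplies each required $\interlace$-relation is delicate, but no tool beyond \Cref{lma: interlacing sums}, \Cref{lma: interlacing seq} and \Cref{lma: recycle interlacing seq} should be needed.
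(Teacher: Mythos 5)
Your setup (the recursion in terms of $g_j=\pSempty{n-1}{j}{(T-1)\setminus\{1\}}$ and $f_j=\pSempty{n-1}{j}{T-1}$, the extremal relation $\pSempty{n}{0}{T}=\sum_j f_j \interlace x\sum_j g_j=\pSempty{n}{n}{T}$ via partial sums along a path of $D_{n-1}(T-1)$ and \Cref{lma: interlacing sums}(3), and the reduction via \Cref{lma: interlacing seq} to consecutive interlacings) is sound and matches the paper's strategy. But the proof is not complete: the consecutive relation $\pSempty{n}{k}{T}\interlace\pSempty{n}{k+1}{T}$ — which you yourself identify as the crux — is only gestured at. Writing $\pSempty{n}{k}{T}=Z+f_k$, $\pSempty{n}{k+1}{T}=Z+xg_k$ and noting $f_k\interlace xg_k$ does not suffice, since adding a common summand $Z$ does not in general preserve interlacing, and your proposed remedy (combining \Cref{lma: recycle interlacing seq}(5) applied to the two staircases $\mathcal Q^{(k)}$, $\mathcal Q^{(k+1)}$ with auxiliary polynomials $Z+g_k$, $Z+xf_k$) is left as unexecuted ``delicate bookkeeping'' with no argument that it closes.

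The missing idea is a short, concrete one: from the upper two rows of $D_{n-1}(T-1)$ one has $\pSempty{n-1}{i}{(T-1)\setminus\{1\}}\interlace\pSempty{n-1}{\ell}{T-1}$ for all $i\leq k\leq\ell$, and summing these relations (\Cref{lma: interlacing sums}, together with part (3) to introduce the factor $x$) shows that
\[
\Bigl( f_k \,,\ \sum_{j=k+1}^{n-1} f_j \,,\ x\cdot\!\sum_{j=0}^{k-1} g_j \,,\ x\cdot g_k \Bigr)
\]
is a four-term interlacing sequence. Applying \Cref{lma: recycle interlacing seq}(4) with window length three, the sum of the first three terms interlaces the sum of the last three, and these two sums are exactly $\pSempty{n}{k}{T}$ and $\pSempty{n}{k+1}{T}$. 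This window-sum device is what bridges the ``prefix times $x$ from the top row, suffix from the middle row'' mismatch you ran into; without it (or an equivalent argument) the central step of the lemma remains unproven.
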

\begin{proof}
    Consider the interlacing sequences consisting of the top row of $D_{n-1}(T-1)$ and the bottom row.
    By \Cref{lma: recycle interlacing seq,lma: interlacing seq}, the diagram
    \begin{center}
    \resizebox{\linewidth}{!}{%
        \begin{tikzpicture}
            [node distance=0.4cm and 0.6cm]
            
            % Top row nodes
            \node (a0) at (0,0) {$\pSempty{n-1}{0}{(T-1)\setminus \{1\}}$};
            \node (a1)  [right=of a0] {$\pSempty{n-1}{1}{(T-1)\setminus \{1\}}$};
            \node (dots1)  [right=of a1] {$\cdots$};
            \node (an)  [right=of dots1]  {$\pSempty{n-1}{n-1}{(T-1)\setminus \{1\}}$};
            \node (b0)   [right=of an]   {$\p{n-1}{0}{\{1\}}{T-1}$};
            \node (b1)   [right=of b0]   {$\p{n-1}{1}{\{1\}}{T-1}$};
            \node (dots2)   [right=of b1]   {$\cdots$};
            \node (bn)   [right=of dots2]   {$\p{n-1}{n-1}{\{1\}}{T-1}$};
            \node (a) [below=of a1] {$\sum_{k= 0}^{n-1} \pSempty{n-1}{k}{(T-1)\setminus \{1\}}$};
            \node (c) [right=of a] {$\sum_{k= 0}^{n-1} (\pSempty{n-1}{k}{(T-1)\setminus \{1\}} + \p{n-1}{k}{\{1\}}{T-1})$};
            \node (b) [right=of c] {$\sum_{k= 1}^{n-1} \p{n-1}{k}{\{1\}}{T-1}$};
            
            % Arrows (rows)
            \draw[->] (a0) -- (a1);
            \draw[->] (a1) -- (dots1) -- (an);
            \draw[->] (an) -- (b0);
            \draw[->] (b0) -- (b1);
            \draw[->] (b1) -- (dots2) -- (bn);
            \draw[->] (a0) -- (a);
            \draw[->] (a) -- (an);
            \draw[->] (b0) -- (b);
            \draw[->] (b) -- (bn);
            \draw[->] (a) -- (c);
            \draw[->] (c) -- (b);
            
        \end{tikzpicture}
    }
    \end{center}
    is an interlacing diagram.
    In particular, we obtain
    \[
    \sum_{k= 0}^{n-1} \pSempty{n-1}{k}{(T-1)\setminus \{1\}}
    \quadinterlace \sum_{k= 0}^{n-1} \pSempty{n-1}{k}{T -1}
    \quadinterlace \sum_{k= 1}^{n-1} \p{n-1}{k}{\{1\}}{T-1} \,.
    \]
    We apply \Cref{lma: interlacing sums} by multiplying the leftmost polynomial by $x$.
    This yields
    \[
    \pSempty{n}{0}{T\setminus \{1\}}
    = \sum_{k= 0}^{n-1} \pSempty{n-1}{k}{T -1}
    \quadinterlace x \cdot \sum_{k= 0}^{n-1} \pSempty{n-1}{k}{(T-1)\setminus \{1\}}
    = \p{n}{n}{\{1\}}{T} \,,
    \]
    which recovers $\pSempty{n}{0}{T} \interlace \pSempty{n}{k}{T}$,
    the interlacing of the first entry and last entry in the middle row of $D_n(T)$, 
    since $\pSempty{n}{0}{T} = \pSempty{n}{0}{T\setminus\{1\}}$ 
    and $\pSempty{n}{n}{T} = \p{n}{n}{\{1\}}{T}$.
    
    \smallskip
    It remains to verify the interlacing of adjacent polynomials in the middle row of $D_n(T)$.
    For this, we turn to the interlacing diagram $D_{n-1}(T-1)$ without the bottom row.
    In words, for every $k\leq n$, we have the interlacing relations~$\pSempty{n-1}{i}{(T-1)\setminus \{1\}} \interlace \pSempty{n-1}{\ell}{T -1}$ for $i\leq k \leq \ell$.
    We apply \Cref{lma: interlacing sums} successively to derive several interlacing relations,
    \begin{align*}
    \pSempty{n-1}{i}{(T-1)\setminus \{1\}} 
    &\spaceinterlace \sum_{j = k+1}^{n-1} \pSempty{n-1}{j}{T -1}\,, \qquad 
    \pSempty{n-1}{k}{T -1} \spaceinterlace \sum_{j = k+1}^{n-1} \pSempty{n-1}{j}{T -1} \,, \\
    \sum_{j = 0}^{k-1} \pSempty{n-1}{j}{(T-1)\setminus \{1\}} 
    &\spaceinterlace \pSempty{n-1}{k}{(T-1)\setminus \{1\}} \,, \qquad 
    \sum_{j = 0}^{k-1} \pSempty{n-1}{j}{(T-1)\setminus \{1\}} \spaceinterlace \pSempty{n-1}{k}{T -1} \,.
    \end{align*}
    Using \Cref{lma: interlacing sums}, we obtain the interlacing sequence
    \[
    \biggl( \pSempty{n-1}{k}{T -1} \spaceinterlace \sum_{j = k+1}^{n-1} \pSempty{n-1}{j}{T -1} \spaceinterlace x \cdot \sum_{j = 0}^{k-1} \pSempty{n-1}{j}{(T-1)\setminus \{1\}} \spaceinterlace x \cdot \pSempty{n-1}{k}{(T-1)\setminus \{1\}} \biggr)
    \]
    By \Cref{lma: recycle interlacing seq}, the sum of the first three polynomials interlaces the sum of the last three polynomials.
    This gives $\pSempty{n}{k}{T} \interlace \pSempty{n}{k+1}{T}$ for $0\leq k \leq n$, which completes the proof.
\end{proof}

\begin{lemma}[Bottom row, \eqref{eq: binter}]
    Let either $T = [1,n-1]$ or $T=[1,n]$. 
    Then, 
    \[
    (\pSempty{n}{0}{T} \,, \pSempty{n}{1}{T}\,, \dots \,, \pSempty{n}{n}{T}) \text{ is interlacing}\,.
    \]
\end{lemma}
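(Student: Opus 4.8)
The plan is to rerun the argument from the proof of the preceding lemma; indeed for $T=[1,n-1]$ the statement is literally that lemma, so the only genuinely new case is $T=[1,n]$. What makes the rerun legitimate is that for $T=[1,n-1]$ one has $T-1=[1,n-2]$ and for $T=[1,n]$ one has $T-1=[1,n-1]$, so in both cases $D_{n-1}(T-1)$ is one of the two diagrams assumed to be interlacing by the induction hypothesis; moreover $1\in T$, so the recursion
\[
    \pSempty{n}{k}{T}(x)=x\sum_{j=0}^{k-1}\pSempty{n-1}{j}{(T-1)\setminus\{1\}}(x)+\sum_{j=k}^{n-1}\pSempty{n-1}{j}{T-1}(x)
\]
writes every $\pSempty{n}{k}{T}$ in terms of entries of the top and middle rows of that interlacing diagram. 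I will abbreviate $a_j=\pSempty{n-1}{j}{(T-1)\setminus\{1\}}$ and $c_j=\pSempty{n-1}{j}{T-1}$ for $0\leq j\leq n-1$, and use $c_j=a_j+b_j$, where $b_j=\p{n-1}{j}{\{1\}}{T-1}$ is the bottom row of $D_{n-1}(T-1)$.

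By \Cref{lma: interlacing seq} it suffices to verify the endpoint relation $\pSempty{n}{0}{T}\interlace\pSempty{n}{n}{T}$ and the neighbour relations $\pSempty{n}{k}{T}\interlace\pSempty{n}{k+1}{T}$ for $0\leq k\leq n-1$. For the endpoints, the recursion gives $\pSempty{n}{0}{T}=\sum_j c_j$ and $\pSempty{n}{n}{T}=x\sum_j a_j$, so I would concatenate the top and bottom rows of $D_{n-1}(T-1)$ along the diagonal arrow into the interlacing sequence $(a_0,\dots,a_{n-1},b_0,\dots,b_{n-1})$ and apply \Cref{lma: recycle interlacing seq} to its lower partial sums: the $n$-th of these is $\sum_j a_j$ and the last is $\sum_j a_j+\sum_j b_j=\sum_j c_j$, so $\sum_j a_j\interlace\sum_j c_j$, and then \Cref{lma: interlacing sums} yields $\sum_j c_j\interlace x\sum_j a_j$, that is, $\pSempty{n}{0}{T}\interlace\pSempty{n}{n}{T}$. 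For the neighbours, I would fix $k$ and read off from the interlacing diagram $D_{n-1}(T-1)$ the relations $a_i\interlace c_\ell$ for all $i\leq k\leq\ell$, as well as $a_i\interlace a_k$ for $i<k$, $c_k\interlace c_\ell$ for $\ell>k$, and $a_k\interlace c_k$. Summing on the dominated side and multiplying by $x$ where needed (\Cref{lma: interlacing sums}), I would assemble the interlacing sequence
\[
    c_k\spaceinterlace\sum_{j=k+1}^{n-1}c_j\spaceinterlace x\sum_{j=0}^{k-1}a_j\spaceinterlace x\,a_k\,,
\]
and conclude with \Cref{lma: recycle interlacing seq} that the sum of its first three terms interlaces the sum of its last three; by the recursion these two sums are precisely $\pSempty{n}{k}{T}$ and $\pSempty{n}{k+1}{T}$, whence $\pSempty{n}{k}{T}\interlace\pSempty{n}{k+1}{T}$.

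Finally, since every $\pSempty{n}{k}{T}$ has nonnegative coefficients and is not identically zero (it enumerates a nonempty set of permutations), \Cref{lma: interlacing seq} converts the two families of relations above into the assertion that $(\pSempty{n}{0}{T},\pSempty{n}{1}{T},\dots,\pSempty{n}{n}{T})$ is an interlacing sequence, which is the claim. I do not expect a serious obstacle, as the whole argument copies the one for the preceding lemma; the only points that require care are the index bookkeeping in the four-term chain, so that \Cref{lma: recycle interlacing seq} returns exactly the neighbouring pair and not an off-by-one variant, and the check that the reduction $T\mapsto T-1$ really lands on $D_{n-1}([1,n-2])$ or $D_{n-1}([1,n-1])$, in particular that the new case $T=[1,n]$ reduces to $D_{n-1}([1,n-1])$, a diagram covered by the induction hypothesis.
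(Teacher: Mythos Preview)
You have been led astray by a typo in the displayed statement. The lemma is labelled ``Bottom row'' and cross-references \eqref{eq: binter}, which is the assertion that the \emph{bottom} row of $D_n(T)$,
\[
\bigl(\p{n}{0}{\{1\}}{T},\ \p{n}{1}{\{1\}}{T},\ \dots,\ \p{n}{n}{\{1\}}{T}\bigr),
\]
is an interlacing sequence. The sequence actually printed in the statement, $(\pSempty{n}{0}{T},\dots,\pSempty{n}{n}{T})$, is the \emph{middle} row of $D_n(T)$ and is exactly the content of the preceding lemma; your own remark that ``for $T=[1,n-1]$ the statement is literally that lemma'' was the red flag. What you wrote is a correct rerun of the middle-row argument (and, for what it is worth, a valid treatment of the case $T=[1,n]$ of that argument), but it does not address the statement the paper intends and actually proves here.

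The paper's proof of the intended bottom-row claim is much shorter than yours. Since $1\in T$, the recursion reads
\[
\p{n}{k}{\{1\}}{T}(x)=x\sum_{j=0}^{k-1}\pSempty{n-1}{j}{(T-1)\setminus\{1\}}(x),
\]
so the bottom row of $D_n(T)$ is, up to the common factor $x$, the sequence of lower partial sums of the \emph{top} row of $D_{n-1}(T-1)$. By the induction hypothesis that top row is interlacing; by \Cref{lma: recycle interlacing seq}(2) its lower partial sums form an interlacing sequence; and multiplying every term by $x$ preserves all interlacing relations (\Cref{lma: interlacing sums}(3)). That is the entire argument---no endpoint/neighbour split, no four-term chain.
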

\begin{proof}
    Consider the lower partial sums of the top row interlacing sequence in the diagram~$D_{n-1}(T-1)$.
    By \Cref{lma: recycle interlacing seq}, these partial sums also form an interlacing sequence,
    \[
    \biggl( \sum_{j< 0} \pSempty{n-1}{j}{(T-1)\setminus \{1\}} \spaceinterlace \sum_{j< 1} \pSempty{n-1}{j}{(T-1)\setminus \{1\}} \spaceinterlace \dots \spaceinterlace \sum_{j< n} \pSempty{n-1}{j}{(T-1)\setminus \{1\}} \biggr)\,.
    \]
    Multiplying each polynomial by $x$ in this sequence will preserve the interlacing relations by \Cref{lma: interlacing sums}. 
    Since~$x\cdot \sum_{j< k} \pSempty{n-1}{j}{(T-1)\setminus \{1\}} = \p{n}{k}{\{1\}}{T}$,
    this sequence equals the bottom row in $D_n(T)$.
\end{proof}

\begin{lemma}[Columns, \eqref{eq: acbinter}]
    Let $T = [1,n-1]$ or $T=[1,n]$.
    Then, 
    \[
    \big(\pSempty{n}{k}{T\setminus \{1\}} \,, \pSempty{n}{k}{T}\,, \p{n}{k}{\{1\}}{T}\big) \text{ is interlacing for all } 0\leq k \leq n \,.
    \]
\end{lemma}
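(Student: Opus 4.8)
The plan is to push everything back to the diagram $D_{n-1}(T-1)$, which is an interlacing diagram by the induction hypothesis (here $T-1=[1,n-1]$ when $T=[1,n]$, and $T-1=[1,n-2]$ when $T=[1,n-1]$). Write $a_i:=\pSempty{n-1}{i}{(T-1)\setminus\{1\}}$ and $c_i:=\pSempty{n-1}{i}{T-1}$ for $0\le i\le n-1$, that is, the polynomials in the top and middle rows of $D_{n-1}(T-1)$. The recursive formulas recalled just before the lemma give
\[
\pSempty{n}{k}{T\setminus\{1\}}=\sum_{j=k}^{n-1}c_j\,,\qquad \p{n}{k}{\{1\}}{T}=x\sum_{i=0}^{k-1}a_i\,,
\]
and $\pSempty{n}{k}{T}=\pSempty{n}{k}{T\setminus\{1\}}+\p{n}{k}{\{1\}}{T}$. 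For $k=0$ the triple reads $(\pSempty{n}{0}{T\setminus\{1\}},\pSempty{n}{0}{T\setminus\{1\}},0)$ and for $k=n$ it reads $(0,\p{n}{n}{\{1\}}{T},\p{n}{n}{\{1\}}{T})$, both interlacing sequences by the conventions, so I will assume $1\le k\le n-1$.

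The heart of the matter is the single interlacing $\pSempty{n}{k}{T\setminus\{1\}}\interlace\p{n}{k}{\{1\}}{T}$. To obtain it, fix $0\le i<k$ and $k\le j\le n-1$; then $i<j$, so $(a_i\to a_{i+1}\to\dots\to a_j\to c_j)$ is a path in $D_{n-1}(T-1)$ and hence an interlacing sequence, which yields $a_i\interlace c_j$. By \Cref{lma: interlacing sums}(3) this gives $c_j\interlace x\cdot a_i$. Keeping $j$ fixed and letting $i$ range over $\{0,\dots,k-1\}$, repeated use of \Cref{lma: interlacing sums}(2) gives $c_j\interlace x\sum_{i=0}^{k-1}a_i=\p{n}{k}{\{1\}}{T}$; then letting $j$ range over $\{k,\dots,n-1\}$, repeated use of \Cref{lma: interlacing sums}(1) gives $\sum_{j=k}^{n-1}c_j=\pSempty{n}{k}{T\setminus\{1\}}\interlace\p{n}{k}{\{1\}}{T}$.

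The two remaining relations then follow formally. Every real-rooted $f\in\RR_{\geq 0}[x]$ satisfies $f\interlace f$ directly from the definition, so \Cref{lma: interlacing sums}(2) applied to $\pSempty{n}{k}{T\setminus\{1\}}\interlace\pSempty{n}{k}{T\setminus\{1\}}$ and $\pSempty{n}{k}{T\setminus\{1\}}\interlace\p{n}{k}{\{1\}}{T}$ gives $\pSempty{n}{k}{T\setminus\{1\}}\interlace\pSempty{n}{k}{T}$, and \Cref{lma: interlacing sums}(1) applied to $\pSempty{n}{k}{T\setminus\{1\}}\interlace\p{n}{k}{\{1\}}{T}$ and $\p{n}{k}{\{1\}}{T}\interlace\p{n}{k}{\{1\}}{T}$ gives $\pSempty{n}{k}{T}\interlace\p{n}{k}{\{1\}}{T}$. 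All three polynomials are real-rooted (each is a nonnegative combination of members of an interlacing sequence, possibly multiplied by $x$, cf.\ \Cref{lma: recycle interlacing seq}), so these three interlacings are precisely the assertion that $(\pSempty{n}{k}{T\setminus\{1\}},\pSempty{n}{k}{T},\p{n}{k}{\{1\}}{T})$ is an interlacing sequence. I expect the only point requiring care to be the index bookkeeping $i<k\le j$, which is exactly what makes $(a_i\to\dots\to a_j\to c_j)$ a legitimate path of $D_{n-1}(T-1)$; beyond that, the proof is a straightforward assembly of the induction hypothesis and the three sum rules of \Cref{lma: interlacing sums}, with no serious obstacle.
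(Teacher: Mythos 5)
Your proof is correct and follows essentially the same route as the paper: both reduce to the interlacing relations $\pSempty{n-1}{i}{(T-1)\setminus\{1\}} \interlace \pSempty{n-1}{j}{T-1}$ for $i < k \leq j$ inside $D_{n-1}(T-1)$, apply the sum rules of \Cref{lma: interlacing sums} to get $\pSempty{n}{k}{T\setminus\{1\}} \interlace \p{n}{k}{\{1\}}{T}$, and conclude via $\pSempty{n}{k}{T} = \pSempty{n}{k}{T\setminus\{1\}} + \p{n}{k}{\{1\}}{T}$. Your write-up merely makes explicit the steps the paper leaves implicit (the degenerate cases $k=0,n$ and the deduction $f \interlace f+g \interlace g$ from $f \interlace g$), which is fine.
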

\begin{proof}
    As done before, consider the interlacing diagram $D_{n-1}(T-1)$ without the bottom row.
    Then, for every $k\leq n$, we have the interlacing relations~$\pSempty{n-1}{i}{(T-1)\setminus \{1\}} \interlace \pSempty{n-1}{\ell}{T -1}$ for $i\leq k \leq \ell$.
    Applying \Cref{lma: interlacing sums} successively yields the interlacing relation
    \[
        \sum_{j\geq k} \pSempty{n-1}{j}{T -1} \quadinterlace x\cdot \sum_{j< k} \pSempty{n-1}{0}{(T-1)\setminus \{1\}}\,.
    \] 
    This restates what we want to prove, 
    since~$\pSempty{n}{k}{T -1} = \pSempty{n}{k}{(T-1)\setminus \{1\}} + \p{n}{k}{\{1\}}{T-1}$.
\end{proof}

\begin{lemma}[top right $\interlace$ bottom left, \eqref{eq: abinter} ]
    Let either $T = [1,n-1]$ or $T=[1,n]$.
    Then,
    \[
      \pSempty{n}{n-1}{T\setminus \{1\}} \spaceinterlace \p{n}{1}{\{1\}}{T}\,.
    \]
\end{lemma}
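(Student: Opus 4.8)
The plan is to collapse both sides to a single summand of the recursion and then read off the required interlacing from the diagram $D_{n-1}(T-1)$ supplied by the inductive hypothesis. Evaluating $\pSempty{n}{k}{T\setminus\{1\}} = \sum_{j=k}^{n-1}\pSempty{n-1}{j}{T-1}$ at $k=n-1$ leaves a single term, and evaluating $\p{n}{k}{\{1\}}{T} = x\sum_{j=0}^{k-1}\pSempty{n-1}{j}{(T-1)\setminus\{1\}}$ at $k=1$ likewise leaves a single term, so that
\[
    \pSempty{n}{n-1}{T\setminus\{1\}} = \pSempty{n-1}{n-1}{T-1}
    \qquad\text{and}\qquad
    \p{n}{1}{\{1\}}{T} = x\cdot\pSempty{n-1}{0}{(T-1)\setminus\{1\}}\,.
\]
Hence the assertion is equivalent to $\pSempty{n-1}{n-1}{T-1}\spaceinterlace x\cdot\pSempty{n-1}{0}{(T-1)\setminus\{1\}}$, and by \Cref{lma: interlacing sums}\,(3) it is enough to prove the plain interlacing $\pSempty{n-1}{0}{(T-1)\setminus\{1\}}\spaceinterlace \pSempty{n-1}{n-1}{T-1}$.

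For this I would invoke the inductive hypothesis. Since $T=[1,n-1]$ or $T=[1,n]$, we have $T-1=[1,n-2]$ or $T-1=[1,n-1]$ respectively, so $D_{n-1}(T-1)$ is one of the two diagrams that is an interlacing diagram by induction (when $n-1=2$, this is one of the base cases \Cref{lma: base case 1,lma: base case 2}). The polynomial $\pSempty{n-1}{0}{(T-1)\setminus\{1\}}$ is the top-left vertex of $D_{n-1}(T-1)$, and $\pSempty{n-1}{n-1}{T-1}$ is the rightmost vertex of its middle row; the two are joined by the path that drops one step from the top-left vertex into the middle row and then runs rightward along the middle row to its last vertex. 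Because $D_{n-1}(T-1)$ is an interlacing diagram, this path is an interlacing sequence, and the first polynomial of an interlacing sequence interlaces the last one (\Cref{lma: interlacing seq}); this gives exactly $\pSempty{n-1}{0}{(T-1)\setminus\{1\}}\spaceinterlace \pSempty{n-1}{n-1}{T-1}$, and together with the reduction above the proof is complete.

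I do not expect a genuine obstacle here; the argument is simply an instance of ``pull back a path in the smaller diagram''. The points needing care are purely bookkeeping: confirming that $T-1$ is one of $[1,n-2]$, $[1,n-1]$ so that the inductive hypothesis is applicable; confirming that the chosen path uses only arrows actually present in $D_{n-1}(T-1)$ (one top-to-middle column arrow followed by middle-row arrows); and noting that the degree side-condition in \Cref{lma: interlacing sums}\,(3) is automatic, since $\pSempty{n-1}{0}{(T-1)\setminus\{1\}}\interlace\pSempty{n-1}{n-1}{T-1}$ already forces $\deg\pSempty{n-1}{n-1}{T-1}$ to differ from $\deg\pSempty{n-1}{0}{(T-1)\setminus\{1\}}$ by at most one.
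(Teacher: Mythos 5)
Your proof is correct and follows essentially the same route as the paper: both identify $\pSempty{n}{n-1}{T\setminus\{1\}}$ and $\p{n}{1}{\{1\}}{T}$ with $\pSempty{n-1}{n-1}{T-1}$ and $x\cdot\pSempty{n-1}{0}{(T-1)\setminus\{1\}}$ via the recursion, extract $\pSempty{n-1}{0}{(T-1)\setminus\{1\}} \interlace \pSempty{n-1}{n-1}{T-1}$ from the interlacing diagram $D_{n-1}(T-1)$ given by the inductive hypothesis, and conclude with \Cref{lma: interlacing sums}(3). Your version is just slightly more explicit about the single-term collapse and the particular path used in $D_{n-1}(T-1)$.
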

\begin{proof}
    Again, consider the interlacing diagram~$D_{n-1}(T-1)$.
    The polynomial~$\pSempty{n-1}{0}{(T-1)\setminus \{1\}}$ interlaces every polynomial in the diagram~$D_{n-1}(T-1)$, 
    in particular~$\pSempty{n-1}{0}{(T-1)\setminus \{1\}} \interlace \pSempty{n}{n-1}{T\setminus \{1\}}$. 
    Thus,~$\pSempty{n}{n-1}{T\setminus \{1\}} \interlace x \cdot \pSempty{n-1}{0}{T\setminus \{1\}} = \p{n}{1}{\{1\}}{T}$ by \Cref{lma: interlacing sums}.
\end{proof}

\begin{proof}[Proof of \Cref{thm: interlacing diag}]
    We proceed by induction on $n$.
    The two base cases for $n=1$ are given in \Cref{lma: base case 1,lma: base case 2}.
    The five lemmas after \Cref{lma: induction step} prove the induction step by \Cref{lma: induction step}.
\end{proof}

%%%%%%%%%%%%%%%%%%%%%%%%%%%%%%%%%%%%%%%%%%%%%%%%%%%%%%%%%%%%%%%%%%
\bibliographystyle{amsalpha}
\bibliography{bibliography}

\providecommand{\bysame}{\leavevmode\hbox to3em{\hrulefill}\thinspace}
\providecommand{\MR}{\relax\ifhmode\unskip\space\fi MR }
% \MRhref is called by the amsart/book/proc definition of \MR.
\providecommand{\MRhref}[2]{%
  \href{http://www.ams.org/mathscinet-getitem?mr=#1}{#2}
}
\providecommand{\href}[2]{#2}
\begin{thebibliography}{ADKE24}

\bibitem[ADKE24]{athanasiadis2024classesposetsrealrootedchain}
Christos~A. Athanasiadis, Theo Douvropoulos, and Katerina Kalampogia-Evangelinou, \emph{Two classes of posets with real-rooted chain polynomials}, Electron. J. Combin. \textbf{31} (2024), no.~4, Paper No. 4.16, 22. \MR{4816497}

\bibitem[AKE23]{MR4565299}
Christos~A. Athanasiadis and Katerina Kalampogia-Evangelinou, \emph{Chain enumeration, partition lattices and polynomials with only real roots}, Comb. Theory \textbf{3} (2023), no.~1, Paper No. 12, 21. \MR{4565299}

\bibitem[Ath18]{MR3878174}
Christos~A. Athanasiadis, \emph{Gamma-positivity in combinatorics and geometry}, S\'em. Lothar. Combin. \textbf{77} ([2016--2018]), Art. B77i, 64. \MR{3878174}

\bibitem[Br{\"a}15]{brändén2014unimodalitylogconcavityrealrootedness}
Petter Br{\"a}nd{\'e}n, \emph{Unimodality, log-concavity, real-rootedness and beyond}, 2015, pp.~437--483. \MR{3409348}

\bibitem[BV25]{braendenvecchi2025}
Petter Br{\"a}nd{\'e}n and Lorenzo Vecchi, \emph{Chow polynomials of uniform matroids are real-rooted}, 2025.

\bibitem[EHL23]{Eur_Huh_Larson_2023}
Christopher Eur, June Huh, and Matt Larson, \emph{Stellahedral geometry of matroids}, Forum Math. Pi \textbf{11} (2023), Paper No. e24, 48. \MR{4653766}

\bibitem[Fis08]{fisk2008polynomialsrootsinterlacing}
Steve Fisk, \emph{Polynomials, roots, and interlacing}, 2008.

\bibitem[FMSV24]{FMSV2024}
Luis Ferroni, Jacob~P. Matherne, Matthew Stevens, and Lorenzo Vecchi, \emph{Hilbert-{P}oincar\'e{} series of matroid {C}how rings and intersection cohomology}, Adv. Math. \textbf{449} (2024), Paper No. 109733, 55. \MR{4749982}

\bibitem[FMV24]{ferroni2024chowfunctionspartiallyordered}
Luis Ferroni, Jacob~P. Matherne, and Lorenzo Vecchi, \emph{Chow functions for partially ordered sets}, 2024.

\bibitem[FS70]{FoataSchutzenberger}
Dominique Foata and Marcel-P. Sch\"utzenberger, \emph{Th\'eorie g\'eom\'etrique des polyn\^omes eul\'eriens}, Lecture Notes in Mathematics, vol. Vol. 138, Springer-Verlag, Berlin-New York, 1970. \MR{272642}

\bibitem[FS24]{ferroni2023valuativeinvariantslargeclasses}
Luis Ferroni and Benjamin Schr\"oter, \emph{Valuative invariants for large classes of matroids}, J. Lond. Math. Soc. (2) \textbf{110} (2024), no.~3, Paper No. e12984, 86. \MR{4795885}

\bibitem[Ham17]{HAMPE2017578}
Simon Hampe, \emph{The intersection ring of matroids}, J. Combin. Theory Ser. B \textbf{122} (2017), 578--614. \MR{3575220}

\bibitem[HZ19]{HAGLUND201938}
James Haglund and Philip~B. Zhang, \emph{Real-rootedness of variations of {E}ulerian polynomials}, Adv. in Appl. Math. \textbf{109} (2019), 38--54. \MR{3954084}

\bibitem[Pet15]{PetersenEulerianNumbers}
T.~Kyle Petersen, \emph{Eulerian numbers}, Birkh\"auser Advanced Texts: Basler Lehrb\"ucher. [Birkh\"auser Advanced Texts: Basel Textbooks], Birkh\"auser/Springer, New York, 2015, With a foreword by Richard Stanley. \MR{3408615}

\bibitem[Sta96]{stanleyCombComm}
Richard~P. Stanley, \emph{Combinatorics and commutative algebra}, second ed., Progress in Mathematics, vol.~41, Birkh\"auser Boston, Inc., Boston, MA, 1996. \MR{1453579}

\bibitem[SW20]{ShareshianWachs}
John Shareshian and Michelle~L. Wachs, \emph{Gamma-positivity of variations of {E}ulerian polynomials}, J. Comb. \textbf{11} (2020), no.~1, 1--33. \MR{4015851}

\bibitem[Wag92]{WAGNER1992459}
David~G. Wagner, \emph{Total positivity of {H}adamard products}, J. Math. Anal. Appl. \textbf{163} (1992), no.~2, 459--483. \MR{1145841}

\end{thebibliography}
    
\end{document}